

\documentclass[12pt]{amsart}
\usepackage{geometry}                
\geometry{letterpaper}                   
\usepackage{graphicx}
\usepackage{amssymb}
\usepackage{epstopdf}
\usepackage{amsmath}
\usepackage{color}
\usepackage{hyperref}
\usepackage{pdfsync}
 \usepackage{tikz}
 \usepackage{tikz-cd}
\usepackage{datetime2}

\usepackage[all]{xy}
\xyoption{matrix}
\xyoption{arrow}

\usetikzlibrary{arrows,decorations.pathmorphing,backgrounds,positioning,fit,petri}

 \tikzset{help lines/.style={step=#1cm,very thin, color=gray},
help lines/.default=.5} 
\tikzset{thick grid/.style={step=#1cm,thick, color=gray},
thick grid/.default=1} 

\textwidth = 6.2 in 
\textheight = 8.6 in 
\oddsidemargin = .1 in 
\evensidemargin = .1 in 
\topmargin = 0.1 in
\headheight = 0.0 in
\headsep = 0.2 in
\parskip = 0.0in
\parindent = 0.2in

 
\newtheorem{thm}{Theorem}[section]
\newtheorem{lem}[thm]{Lemma}
\newtheorem{cor}[thm]{Corollary}
\newtheorem{prop}[thm]{Proposition}

\newenvironment{customthm}[1]
 {\innercustomthm}{\endinnercustomthm}


\theoremstyle{definition}
\newtheorem{defn}[thm]{Definition}
\newtheorem{eg}[thm]{Example}

\theoremstyle{remark}

\numberwithin{equation}{section}

\DeclareGraphicsRule{.tif}{png}{.png}{`convert #1 `dirname #1`/`basename #1 .tif`.png}

 





\DeclareMathOperator{\Hom}{Hom}%
\DeclareMathOperator{\Ext}{Ext}%
\DeclareMathOperator{\End}{End}%
%
%

\DeclareMathOperator{\undim}{\underline{dim}}
 
\newcommand{\field}[1]{\mathbb{#1}}
\newcommand{\ZZ}{\ensuremath{{\field{Z}}}}

\newcommand{\RR}{\ensuremath{{\field{R}}}}

\newcommand{\NN}{\ensuremath{{\field{N}}}}

\newcommand{\commentout}[1]{}

\newcommand{\cC}{\ensuremath{{\mathcal{C}}}}
\newcommand{\cD}{\ensuremath{{\mathcal{D}}}}

\newcommand{\cF}{\ensuremath{{\mathcal{F}}}}

\newcommand{\cI}{\ensuremath{{\mathcal{I}}}}

\newcommand{\cS}{\ensuremath{{\mathcal{S}}}}

\newcommand{\cU}{\ensuremath{{\mathcal{U}}}}

\newcommand{\cW}{\ensuremath{{\mathcal{W}}}}

\newcommand{\cX}{\ensuremath{{\mathcal{X}}}}

\title{Which cluster morphism categories are CAT(0)}
\author{Kiyoshi Igusa}
\address{Department of Mathematics, Brandeis University, Waltham, MA 02454}\email{igusa@brandeis.edu}
\thanks{First author supported by Simons Foundation Grant \#686616}
\author{Gordana Todorov}
\address{Department of Mathematics, Northeastern University, Boston, MA 02115}\email{g.todorov@northeastern.edu}

\subjclass[2020]{
16G20; 20F55}


 
\begin{document}

\begin{abstract}
The cluster morphism category of an hereditary algebra was introduced in \cite{IT13} to show that the picture space of an hereditary algebra of finite representation type is a $K(\pi,1)$ for the associated picture group, thereby allowing for the computation of the homology of picture groups of finite type as carried out in \cite{ITW} for the case of $A_n$.

In this paper we show that the cluster morphism category is a $CAT(0)$-category for hereditary algebras of finite or tame type with only small tubes. As a consequence, we get that the classifying space of the cluster morphism category is a locally $CAT(0)$ space and, as a consequence of that, we get that this classifying space is a $K(\pi,1)$.
\end{abstract}

\maketitle


\def\gg{\gamma}


%
%

\section*{Introduction}

We define a $CAT(0)$-category to be a cubical category (Definition \ref{def: cubical category}) whose classifying space is locally $CAT(0)$. 
To show that the cluster morphism category of an hereditary algebra (Definition \ref{def: cluster morphism category}) is $CAT(0)$ we first show it is a cubical category.

\begin{customthm}{A}[Theorem \ref{thm: CMC is cubical}]\label{thm A}
The cluster morphism category of any hereditary algebra $\Lambda$ is a cubical category.
\end{customthm}

In Theorem \ref{thm: conditions for CAT0} we recall (from \cite{NonX}) criteria for when a cubical category is a $CAT(0)$-category. The rest of the paper is devoted to verifying these categorical conditions on the cluster morphism categories for the algebras in our main theorem:

\begin{customthm}{B}[Theorem \ref{main thm}]\label{thm B}
The cluster morphism category of an hereditary algebra of finite or tame representation type is a $CAT(0)$-category if and only if there are no tubes of rank $\ge3$ in its Auslander-Reiten quiver.
\end{customthm}

For hereditary algebras of finite type, we already showed in \cite{IT13} that the classifying space of the cluster morphism category of $\Lambda$ is a $K(\pi,1)$ for the picture group of $\Lambda$. We also showed, in \cite{IT13}, that this classifying space is homeomorphic to the ``picture space'' of $\Lambda$ defined in \cite{ITW}. The main result of \cite{ITW} was the calculation of the cohomology of the picture group for $\Lambda$ of type $A_n$, assuming that the picture space is a $K(\pi,1)$ for the picture group of $\Lambda$. In this paper we show that the classifying space of the cluster morphism category, for $\Lambda$ of finite type or tame with small tubes, is locally $CAT(0)$ by showing that those categories are $CAT(0)$-categories (Theorem \ref{thm B}). This implies that the classifying space is a $K(\pi,1)$, but being locally $CAT(0)$ is a stronger statement.

The cluster morphism category of an hereditary algebra $\Lambda$ has, as objects, the finitely generated wide subcategories $\cW$ of $mod\text-\Lambda$. A morphism $[T]:\cW\to \cW'$ is given by a partial cluster $T$ in $\cW$ so that $T^\perp\cap \cW=\cW'$ (Definition \ref{def: cluster morphism category}.) When $\Lambda$ has finite type, this is a finite category (having a finite number of objects and morphisms). Thus, its classifying space is a finite cell complex. For any $\Lambda$, this classifying space is finite dimensional since at most $n$ nonidentity morphisms can be composed. This implies, e.g., that the fundamental group of the cluster morphism category, which is the picture group of $\Lambda$, is torsion-free.

In Section \ref{section one: cubical categories} we recall the definition of a cubical category (Definition \ref{def: cubical category}) and show that the cluster morphism category of any hereditary algebra is cubical (Theorem \ref{thm: CMC is cubical}). The main property of cubical properties is that every morphism is embedded in a cube of morphisms making its classifying space a union of cubes. In the case of the cluster morphism category, these cubes are given by factoring each cluster morphism of rank $n$ into $n!$ signed exceptional sequences.

In Theorem \ref{thm: conditions for CAT0} we give conditions under which a cubical category is a $CAT(0)$-category and we use the rest of this paper to determine when these conditions hold for cluster morphism categories of hereditary algebras of finite or tame type. 


%
%

\section{Cubical categories}\label{section one: cubical categories}

We review the definition of a cubical category from \cite{NonX}. To simplify the discussion, we add the condition that the category has a ``rank'' filtration, a condition satisfied by the cluster morphism category.

The basic example of a cubical category is the \emph{$n$-cube category} $\cI^n$ whose objects are the $2^n$ subsets of the set $[n]=\{1,2,\cdots,n\}$ and whose morphisms are the inclusion maps. The ``rank'' of a subset of $[n]$ is defined to be the size of its complement. This is a finite category with at most one morphism between any two objects. Given any morphism $f:X\to Y$ in any category $\cC$, we denote by $\cF ac(f)$ the category of factorizations $X\to Z\to Y$ of $f$. A morphism of factorizations $(X\to Z\to Y)\to (X\to Z'\to Y)$ is a morphism $Z\to Z'$ making the 4 object diagram commute. $\cF ac(f)$ has initial object $X\xrightarrow=X\to Y$ and terminal object $X\to Y\xrightarrow=Y$. We have a \emph{middle object functor} $\cF ac(f)\to \cC$ sending $(X\to Z\to Y)$ to $Z$. 

\begin{defn}\label{def: cubical category}
A \emph{cubical category} is a small category with graded object set $\cC=\coprod_{n\in \ZZ} \cC_n$ having the following properties where objects $X\in \cC_n$ are said to have \emph{rank} $rk\,X=n$ and a morphism $f:X\to Y$ is said to have rank $rk\,f=rk\,X-rk\,Y$.
\begin{enumerate}
\item Every morphism $f:X\to Y$ has $rk\,f\ge0$ and $rk\,f=0$ only when $X=Y$ and $f=id_X$.
\item For every morphism $f:X\to Y$, the category $\cF ac(f)$ of factorizations of $f$ is isomorphic to the $n$-cube category $\cI^n$ with $n=rk\,f$ and the ``middle object functor'': $\cF ac(f)\to \cC$ is an embedding.
\item A morphism $f:X\to Y$ of rank $n$ is uniquely determined by its $n$ first factors. These are the rank $1$ morphisms $t_i:X\to A_i$, $i=1,\cdots,n$, which can be completed to a factorization $X\to A_i\to Y$ of $f$.
\item A morphism $f:X\to Y$ of rank $n$ is uniquely determined by its $n$ last factors. These are the rank $1$ morphisms $x_i:B_i\to Y$, $i=1,\cdots,n$, which can be completed to a factorization $X\to B_i\to Y$ of $f$.
\end{enumerate}
\end{defn} 

\begin{prop} Let $f:X\to Y$ be a morphism in a cubical category $\cC$. Then, the functor $\cI^n\to \cC$ given by composing any isomorphism $\cI^n\cong \cF ac(f)$ with the embedding $\cF ac(f)\to \cC$, as given in Condition (2) above, preserves ranks of morphisms but increases the rank of each object by $rk\,Y$.
\end{prop}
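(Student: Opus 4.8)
The plan is to understand the composite functor $F\colon \cI^n\to\cC$ (a chosen isomorphism $\cI^n\cong\cF ac(f)$ followed by the middle object functor) by first locating the images of the two extreme vertices of the cube and then propagating rank information along the edges of $\cI^n$. Everything will come down to Conditions (1) and (2) of Definition \ref{def: cubical category}.

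First I would record that any isomorphism of categories preserves initial and terminal objects. In $\cI^n$ these are $\emptyset$ and $[n]$; in $\cF ac(f)$ they are the factorizations $X\xrightarrow{=}X\to Y$ and $X\to Y\xrightarrow{=}Y$, whose middle objects are $X$ and $Y$ respectively. Hence $F(\emptyset)=X$ and $F([n])=Y$, so that $rk\,F(\emptyset)-rk\,F([n])=rk\,X-rk\,Y=rk\,f=n$. I would also note that, since the middle object functor is an embedding by Condition (2), $F$ is injective on objects.

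The main step is to show that $F$ carries every rank-one morphism of $\cI^n$, i.e.\ every inclusion $S\subseteq S\cup\{a\}$ with $a\notin S$, to a rank-one morphism of $\cC$. For this I would extend such an inclusion to a maximal chain $\emptyset=S_0\subsetneq S_1\subsetneq\cdots\subsetneq S_n=[n]$ with $|S_i|=i$. Each link $S_{i-1}\subsetneq S_i$ is a non-identity morphism of $\cI^n$ between distinct objects, so by injectivity on objects $F(S_{i-1}\subseteq S_i)$ is a morphism between distinct objects of $\cC$, hence is not an identity, hence has rank $\ge 1$ by Condition (1). Since rank in $\cC$ is additive under composition (being a difference of object ranks, it telescopes), and these $n$ morphisms compose to $F(\emptyset\subseteq[n])$, which has rank $n$ by the first step, the $n$ numbers $rk\,F(S_{i-1}\subseteq S_i)$ are positive integers summing to $n$ and therefore each equal $1$. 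As every rank-one morphism of $\cI^n$ occurs as a link of some such maximal chain, all of them have image of rank $1$.

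To finish, any morphism $S\subseteq T$ of $\cI^n$ is a composite of $|T\setminus S|$ rank-one morphisms (add the elements of $T\setminus S$ one at a time), so additivity gives $rk\,F(S\subseteq T)=|T\setminus S|=(n-|S|)-(n-|T|)=rk_{\cI^n}(S\subseteq T)$; thus $F$ preserves ranks of morphisms. Applying this to the morphism $S\subseteq[n]$ yields $rk\,F(S)=rk\,F([n])+rk_{\cI^n}(S)=rk\,Y+rk_{\cI^n}(S)$, which is exactly the asserted shift of object ranks by $rk\,Y$. I expect the only delicate point to be the pigeonhole count in the main step: it genuinely needs the embedding hypothesis of Condition (2), since without injectivity on objects an edge of the cube could a priori map to an identity morphism of $\cC$, breaking the "$n$ positive integers summing to $n$'' argument.
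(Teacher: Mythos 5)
Your proof is correct and follows essentially the same route as the paper's: both identify the images of the initial and terminal objects as $X$ and $Y$, run a maximal chain of $n$ non-identity morphisms from $X$ to $Y$, and use Condition (1) together with the telescoping of ranks to force each link to have rank exactly $1$, from which the rank shift by $rk\,Y$ on objects follows. Your explicit remark that the embedding hypothesis of Condition (2) is what guarantees the links are non-identities in $\cC$ is a point the paper leaves implicit, but the argument is the same.
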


\begin{proof}
Since $X,Y$ are initial and terminal in $\cF ac(f)$, they correspond to the initial and terminal objects of $\cI^n$, call them $A,B$. Then every object $C$ in $\cI^n$ lies in a chain of $n$ rank 1 morphism $A\to C_{n-1}\to\cdots\to C_2\to C_{1}\to B$ (so that $rk\,C_j=j$. Since $\cI^n\cong \cF ac(f)$, every object $Z$ in $\cF ac(f)$ lies in a chain of $n$ nonidentity morphisms from $X$ to $Y$. Since $rk\,X-rk\,Y=n$, all of these nonidentity morphisms must have rank 1 and each object $C_j$ in the chain of morphisms in $\cI^n$ corresponds to an object of $\cF ac(f)$ of rank $rk\,Y+j$. Since all objects of $\cI^n$ lie in such a chain, the isomorphism $\cI^n\cong \cF ac(f)$ increases the rank of each object by $rk\,Y$. It follows that corresponding morphisms have the same rank.
\end{proof}

We review some basic cluster theory for hereditary algebras which is needed for the definition of a cluster morphism category (Definition \ref{def: cluster morphism category}). A module $E$ is called \emph{exceptional} if it is a rigid brick where \emph{rigid} means $\Ext_\Lambda^1(E,E)=0$ and a \emph{brick} is a module whose endomorphism ring is a division algebra. An exceptional module is uniquely determined by its dimension vector $\undim E\in \NN^n$ where $n$ is the \emph{rank} of $\Lambda$, i.e., the number of nonisomorphic simple $\Lambda$-modules. A \emph{cluster-tilting object} for $\Lambda$ is a rigid object in the cluster category of $\Lambda$ with $n$ components \cite{BMRRT}. This is equivalent to a set of exceptional modules $E_1,\cdots,E_k$ and shifted indecomposable projective modules $P_{k+1}[1],\cdots,P_n[1]$ so that $\Ext^1_\Lambda(E,E)=0$ and $\Hom_\Lambda(P,E)=0$ for $E=\bigoplus E_i$ and $P=\bigoplus P_j$. We note that every shifted projective object $P[1]$ is uniquely determined by its dimension vector $\undim(P[1]):=-\undim P\in \ZZ^n$. A \emph{partial cluster-tilting set} is any subset of the set 
\[
	\{E_1,\cdots,E_k,P_{k+1}[1],\cdots,P_n[1]\}.
\]
These are objects of $\cD^b(\Lambda)$, the bounded derived category of $mod\text-\Lambda$, which are denoted $T_1,\cdots,T_n$. Thus $T=\bigoplus T_i=E\oplus P[1]\in \cD^b(\Lambda)$. We use the notation $|T_i|$ for the underlying module of $T_i$. Thus $|E_i|=E_i$ and $|P_j[1]|=P_j$. Also, $|T|=\bigoplus |T_i|$.

We denote by $\cC_\Lambda$ the set of isomorphism classes of exceptional modules $E$ and shifted indecomposable projective modules $P[1]$. Thus, every partial cluster-tilting set is a subset of $\cC_\Lambda$. Given any partial cluster tilting object $T$, its \emph{(right $\Hom$-$\Ext$) perpendicular category} $T^\perp$ ($=|T|^\perp$) is the full subcategory of $mod\text-\Lambda$ of all modules $M$ so that $\Hom_\Lambda(|T|,M)=0=\Ext^1_\Lambda(|T|,M)$. Then $T^\perp$ is a finitely generated {wide subcategory} of $mod\text-\Lambda$. We recall that a full subcategory $\cW$ of $mod\text-\Lambda$ is \emph{wide} if it is an exactly embedded abelian subcategory of $mod\text-\Lambda$ which is closed under extensions. A wide subcategory $\cW$ is \emph{finitely generated} if it contains one object $P$ so that every other object of $\cW$ is a quotient of $P^k$ for some $k$. Every finitely generated wide subcategory of $mod\text-\Lambda$ occurs as a perpendicular category (\cite{S92}, \cite{IOTW3}). Also, every finitely generated wide subcategory $\cW$ is isomorphic to $mod\text-H_\cW$ for some hereditary algebra $H_\cW$. A cluster-tilting set in $\cW$ is defined to be a subset of $\cD^b(\cW)$ corresponding to a cluster-tilting set for $H_\cW$. We denote by $\cC_\cW$ the set of isomorphism classes of exceptional objects of $\cW$ and shifts $P[1]\in \cD^b(\cW)$ of indecomposable projective objects $P\in\cW$. Thus $\cC_\cW\cong\cC_{H_\cW}$.

\begin{defn}\label{def: cluster morphism category}
The \emph{cluster morphism category} $\cX(\Lambda)$ of an hereditary algebra $\Lambda$ has as objects the finitely generated wide subcategories $\cW$ of $mod\text-\Lambda$. A morphism $[T]:\cW\to \cW'$ in $\cX(\Lambda)$ is defined to be a \emph{partial cluster tilting set} $T$ in $\cW$ so that $\cW'=T^\perp\cap \cW$. 

The composition of cluster morphisms 
\[
	\cW\xrightarrow{[S]} \cW''\xrightarrow{[T]} \cW'
\]
is defined to be the cluster morphism $[R]:\cW\to \cW''$ where $R$ is the partial cluster-tilting object in $\cW$ given by
\[
	R=S\,\textstyle{\coprod}\, \sigma_S(T)
\]
where $\sigma_S:\cC_{\cW''}\to \cC_\cW$ sends $T\in \cC_{\cW''}$ to the unique element $\sigma_S(T)\in\cC_\cW$ satisfying the following.
\begin{enumerate}
\item $S\,\textstyle{\coprod}\, \sigma_S(T)
=\{S_1,\cdots,S_r,\sigma_ST\}$ is a partial cluster tilting set for $\cW$.
\item $\undim \sigma_ST-\undim T$ is an integer linear combination of $\undim S_i$.
\item $T^\perp\cap \cW''=(S_1\oplus \cdots\oplus S_r\oplus \sigma_ST)^\perp \cap \cW$.
\end{enumerate}
The mapping $\sigma_S(T)$ is a bijection from $\cC_{\cW''}$ to the set of all objects of $\cC_{\cW}$ which are compatible with $S$ \cite[Prop. 1.8]{IT13}.
\end{defn}

\begin{thm}[Theorem \ref{thm A}]\label{thm: CMC is cubical}
The cluster morphism category of any hereditary algebra $\Lambda$ is a cubical category.
\end{thm}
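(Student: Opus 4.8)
The plan is to verify the four axioms of Definition~\ref{def: cubical category} for $\cX(\Lambda)$ one at a time. Grade the objects by letting $rk\,\cW$ be the number of isomorphism classes of simple objects of the finitely generated wide subcategory $\cW$, i.e.\ the rank of the hereditary algebra $H_\cW$. Since a partial cluster-tilting set with $m$ summands in $\cW$ has perpendicular category inside $\cW$ of rank $rk\,\cW-m$, a morphism $[T]:\cW\to\cW'$ automatically has $rk\,[T]=rk\,\cW-rk\,\cW'$ equal to the number of indecomposable summands of $T$. Axiom~(1) is then immediate, this number being $\ge 0$ and vanishing precisely when $T=\emptyset$, in which case $\cW'=\emptyset^\perp\cap\cW=\cW$ and $[T]=id_\cW$.

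The heart of the proof is Axiom~(2). Fix $[T]:\cW\to\cW'$ with $T=\{T_1,\dots,T_n\}$. First I would use the composition law of Definition~\ref{def: cluster morphism category} to identify the factorizations of $[T]$ with the subsets of $\{1,\dots,n\}$: to $J\subseteq\{1,\dots,n\}$ assign
\[
	\cW\xrightarrow{\ [S_J]\ }\ S_J^\perp\cap\cW\ \xrightarrow{\ [\sigma_{S_J}^{-1}(T\setminus S_J)]\ }\ \cW',\qquad S_J:=\{T_i:i\in J\}.
\]
This makes sense: $T\setminus S_J$ consists of objects of $\cC_\cW$ compatible with $S_J$ (they lie, together with $S_J$, inside the partial cluster-tilting set $T$), so $\sigma_{S_J}^{-1}(T\setminus S_J)$ is a well-defined partial cluster-tilting set in $S_J^\perp\cap\cW$ by \cite[Prop.~1.8]{IT13}, and its perpendicular there equals $\cW'$ by property~(3) in the definition of composition applied with $R=T$. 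Conversely any factorization $\cW\xrightarrow{[S]}\cW''\xrightarrow{[U]}\cW'$ of $[T]$ satisfies $T=S\,\textstyle\coprod\,\sigma_S(U)$, forcing $S$ to be one of the $S_J$ and $U=\sigma_S^{-1}(T\setminus S)$; as the first leg $[S]$ literally records the set $S$, distinct $J$ give distinct factorizations. Next I would check, again from the composition law together with the compatibility of the maps $\sigma$ established in \cite{IT13}, that a morphism of factorizations from the $J$-th to the $J'$-th exists iff $J\subseteq J'$ and is then unique; this yields an isomorphism $\cF ac([T])\cong\cI^n$ under which the $J$-th object has rank $rk\,\cW-\size{J}=rk\,\cW'+(n-\size{J})$, as predicted by the Proposition above.

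What remains in Axiom~(2)---and what I expect to be the main obstacle---is that the middle-object functor $\cF ac([T])\to\cX(\Lambda)$, $J\mapsto S_J^\perp\cap\cW$, is an embedding; since $\cI^n$ has at most one morphism between any two objects, this reduces to injectivity on objects, i.e.\ to the claim that distinct subsets $S$ of $\{T_1,\dots,T_n\}$ have distinct perpendicular categories inside $\cW$. Here the underlying modules $|T_1|,\dots,|T_n|$ are pairwise non-isomorphic (by the defining condition $\Hom(P,E)=0$, no module summand of a partial cluster-tilting set can equal the underlying projective of a shifted-projective summand), so $S\mapsto|S|$ is injective on subsets; the point needing work is that $|S|$ is recovered from $S^\perp\cap\cW$. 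One approach: the classes $[\,|T_i|\,]$ are linearly independent in $K_0(\cW)$, and $K_0(S^\perp\cap\cW)$ is, rationally, the intersection of the kernels of the functionals $\langle[\,|T_i|\,],-\rangle$ over $T_i\in S$ for the nondegenerate Euler form of $\cW$, so the subgroup $K_0(S^\perp\cap\cW)\subseteq K_0(\cW)$ recovers the span of $\{[\,|T_i|\,]:T_i\in S\}$ and hence $S$; alternatively one invokes the structure theory of perpendicular categories of partial cluster-tilting sets from \cite{IT13, IOTW3}.

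Finally, Axioms~(3) and~(4) follow from the factorization description. The rank-$1$ first factors of $[T]$ are exactly the $n$ morphisms $[\{T_i\}]:\cW\to\{T_i\}^\perp\cap\cW$ (the factorizations indexed by singletons), and from them one recovers $\cW$ and $T=\bigcup_i\{T_i\}$, hence $[T]$, giving~(3). Dually, the rank-$1$ last factors of $[T]$ are the morphisms $[\{\sigma_{T\setminus\{T_i\}}^{-1}(T_i)\}]:(T\setminus\{T_i\})^\perp\cap\cW\to\cW'$; that these determine $[T]$ is the mirror of~(3): one recovers $\cW$ as the join of the corank-$1$ subcategories $(T\setminus\{T_i\})^\perp\cap\cW$ (using the injectivity just discussed to see they are not all equal), and then recovers $T$ via the reversal operation on signed exceptional sequences established in \cite{IT13}. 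This gives~(4) and completes the plan.
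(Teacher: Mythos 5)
Your handling of axioms (1)--(3), and of the cube structure in (2), matches the paper's proof in substance. One remark on (2): for the injectivity of the middle-object functor the paper uses a one-line rank count rather than your $K_0$ argument --- if two distinct subsets $S\neq S'$ of $T$ had $S^\perp\cap\cW=(S')^\perp\cap\cW$, this category would also equal $(S\cup S')^\perp\cap\cW$, which is impossible because the rank of the perpendicular category drops by exactly the number of summands and $S\cup S'$ is strictly larger than $S$. Your Euler-form/$K_0$ argument would also work, but it is heavier than needed for subsets of a single fixed partial cluster-tilting set.

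The genuine gap is in axiom (4). The phrase ``recovers $T$ via the reversal operation on signed exceptional sequences established in \cite{IT13}'' does not point at an actual mechanism, and (4) is the one axiom where real work is required: the last factors $[L_i]:\cW_i\to\cW'$ live in $n$ different source categories and none of the objects $L_i=\sigma_{T\setminus\{T_i\}}^{-1}(T_i)$ is literally a summand of $T$, so there is nothing to read off as in (3). The paper's argument is linear algebra with the Euler--Ringel form. First, $\cW$ is recovered from the last factors as the wide subcategory generated by the $\cW_i$, and the form restricted to $\cW$ is nonsingular. Then, writing $R_i$ for the first factors, the fact that $L_j$ lies in $(T\setminus\{T_j\})^\perp$ and the defining properties of $\sigma_{T\setminus\{T_i\}}$ yield
\[
 \left<\undim R_i,\undim L_j\right>=\delta_{ij}\dim_K\End_\Lambda(L_i),\qquad \left<\undim R_i,\undim T_k\right>=0
\]
for any cluster-tilting set $\{T_k\}$ of $\cW'$. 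Nonsingularity forces a unique solution $\undim R_i$, and an exceptional module or shifted projective is determined by its dimension vector, so the last factors determine the first factors and hence, by (3), the morphism. This is the same first-factor/last-factor (i.e.\ $c$-vector) duality the paper reuses later when verifying the pairwise-compatibility condition, so it is not an incidental detail; if you want to keep a citation-based route you would need to locate a precise statement in \cite{IT13} performing this inversion, but the computation above is the intended content of axiom (4).
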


\begin{proof}
Let $\cC$ denote the cluster morphism category of $\Lambda$. The objects of $\cC$ are finitely generated wide subcategories $\cW$ of $mod\text-\Lambda$. Each of these is equivalent to a module category of an hereditary algebra $H_\cW$ of finite rank which we define to be the rank of $\cW$. Also, $\cW$ is the perpendicular category of some partial cluster tilting set $T=\{T_1,\cdots,T_t\}$, i.e., $\cW=T^\perp$ and $rk\,\cW=n-t$.

A cluster morphism $[R]:\cW\to \cW'$ is, by definition, a partial cluster-tilting set in $\cW$: 
\[
	R=\{R_1,\cdots,R_r\}\subset \cC_\cW
\]
so that $\cW'=\cW\cap R^\perp$. It follows that $r=rk\,\cW-rk\,\cW'$. When $r=0$, $\cW'=\cW$ and $[\emptyset]:\cW\to\cW$ is the identity map. Thus, objects and morphisms of $\cC$ have ranks so that Condition (1) in Definition \ref{def: cubical category} is satisfied.

Every factorization of a cluster morphism $[R]:\cW\to \cW'$ of rank $r$ is given by
\[
	\cW\xrightarrow{[S]} \cW''\xrightarrow{[T]} \cW'
\]
where $S$ is a subset of $R$ of size, say $s$ and $T\subset \cC_{\cW''}$ is a partical cluster tilting set in $\cW''=S^\perp\cap \cW$ so that
\[
	R=S\,\textstyle{\coprod}\, \sigma_S(T)
\]
where $\sigma_S:\cC_{\cW''}\to \cC_\cW$ is described in Definition \ref{def: cluster morphism category}. The objects $\cW''=S^\perp\cap \cW$ are distinct for different $S$ since, otherwise we would have $(S')^\perp=S^\perp=(S\cup S')^\perp$ which is impossible since $S\cup S'$ is larger than $S$. (The size of $S$ is $rk\,\cW''-rk\,\cW'$.) This gives an isomorphism between the category of factorizations of the cluster morphism $[R]:\cW\to \cW'$ with the cube category $\cI^r$. So, Condition (2) is satisfied. 

The possible first factors of the cluster morphism $[R]$ are the elements $[R_i]$ of $R$. So, $[R]$ is determined by its first factors and Condition (3) is satisfied.

To verify the last property (4), let $[L_i]:\cW_i\to \cW'$ be the last factors of $[R]$. Then we have the following factorizations of the morphism $[R]$.
\[
	\cW\xrightarrow{[S^i]}\cW_i\xrightarrow{[L_i]}\cW'.
\]
where $[S^i]=[R_1,\cdots,\widehat{R_i},\cdots,R_r]$. Since $\cW_i=\cW\cap (S^i)^\perp$, we have the following linear equation
\[
	\left< \undim R_i,\undim L_j\right>=0
\]
for $j\neq i$ where $\left<\cdot,\cdot\right>$ is the \emph{Euler-Ringel form} uniquely determined by the formula
\[
	\left< \undim A,\undim B\right>=\dim_K\Hom_\Lambda(A,B)-\dim_K\Ext^1_\Lambda(A,B)
\]
and $\undim M\in \NN^n$ is the dimension vector of any $\Lambda$-module $M$. If we choose a cluster tilting set $T=\bigoplus T_k$ for $\cW'=S^\perp\cap \cW$, we also have
\[
	\left< \undim R_i,\undim T_k\right>=0
\]
for all $i,k$. Since $\sigma_{S^i}(T_i)=R_i$, the characterizing properties of the mapping $\sigma_{S^i}:\cC_{\cW''}\to \cC_\cW$ also give one more linear equation:
\[
	\left< \undim R_i,\undim L_i\right>=\left< \undim L_i,\undim L_i\right>. 
\]
This is the dimension over $K$ of the division algebra $\End_\Lambda(L_i)$. The wide subcategory $\cW$ is determined by the last factors $L_i:\cW_i\to \cW'$ since $\cW$ is generated by the union of the $\cW_i$. The rank of $\cW$ is equal to $r$ plus the rank of $\cW'$.
Since the Euler-Ringel form, restricted to $\cW$ is nonsingular, the linear equations above have a unique solution for $\undim R_i$ given the last factors $L_j$. Each $R_i\in\cW\coprod \cW[1]$ is determined by its dimension vector $\undim R_i\in\ZZ^n$. Therefore, the last factors $L_i$ determine the first factors $R_i$ of the cluster morphism $[R]$. And the first factors determine $[R]$. Thus, Condition (4) is satisfied. So, $\cC$, the cluster morphism category of $\Lambda$ is a cubical category.
\end{proof}

%
%

\section{Pairwise compatibility}

Let $\cC$ be a cubical category (Definition \ref{def: cubical category}). We say that $\cC$ is a \emph{$CAT(0)$-category} if its classifying space is locally $CAT(0)$. The following Theorem \ref{thm: conditions for CAT0} from \cite{NonX} converts this into a sufficient algebraic condition. This is the theorem that we use in order to determine when a cluster morphism category is $CAT(0)$.

We recall that the classifying space of a cubical category is a locally cubical space in the sense that its universal covering is cubical. Gromov \cite{Gromov87} gave necessary and sufficient conditions for a simply connected cubical space to be $CAT(0)$. In \cite{NonX} we used Gromov's theorem to prove Theorem \ref{thm: conditions for CAT0} below.

\begin{thm}\cite{NonX}\label{thm: conditions for CAT0}
The following are sufficient conditions for a cubical category $\cC$ to be a $CAT(0)$-category. 
\begin{enumerate}
\item $n$ morphisms $X\to A_i$ form the first factors of a morphism $X\to Y$ of rank $n$ if and only if every pair of them form the first factors of a morphism of rank $2$. 
\item $n$ morphisms $B_i\to Y$ form the last factors of a morphism $X\to Y$ of rank $n$ if and only if every pair of them form the last factors of a morphism of rank $2$.
\item There exists a faithful functor from $\cC$ to a group $G$ considered as a category with one object.
\end{enumerate}
Furthermore, the first two conditions are necessary.
\end{thm}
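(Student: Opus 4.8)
The plan is to reduce the statement, by way of Gromov's combinatorial criterion for non-positive curvature, to a purely local analysis of the classifying space $B\cC$. Recall that $B\cC$ is a locally cubical space: its universal cover $\widetilde{B\cC}$ is a cube complex whose cubes are indexed by the morphisms of $\cC$, the cube $C_f$ attached to a morphism $f\colon X\to Y$ being a copy of $\cF ac(f)\cong\cI^{rk\,f}$ realized through the middle object embedding of Definition \ref{def: cubical category}(2). By Gromov's link condition together with the Cartan--Hadamard theorem for piecewise-Euclidean cube complexes, a simply connected cube complex is $CAT(0)$ if and only if it is non-positively curved, which in turn holds if and only if the link of every vertex is a flag simplicial complex. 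Since the covering projection $\widetilde{B\cC}\to B\cC$ is a local isometry, it is enough to analyze the link of each object $X$ inside $B\cC$.

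First I would identify this link. A cube $C_f$ contains $X$ as a vertex exactly when $f$ factors through $X$, say $f=f_2\circ f_1$ with $f_1\colon Z\to X$ and $f_2\colon X\to Y$; inside $C_f\cong\cI^{rk\,f_1+rk\,f_2}$ the vertex corresponding to the factorization $Z\to X\to Y$ has link a simplex which is the join of a simplex on the last factors of $f_1$ (the rank $1$ morphisms into $X$) and a simplex on the first factors of $f_2$ (the rank $1$ morphisms out of $X$). Assembling these over all such $f$, and using parts (2)--(4) of Definition \ref{def: cubical category} to see that distinct cubes through $X$ contribute distinct simplices glued along common faces, one obtains a join decomposition
\[
	\mathrm{Lk}_{B\cC}(X)=\mathrm{Lk}^-(X)\ast\mathrm{Lk}^+(X),
\]
where the vertices of $\mathrm{Lk}^+(X)$ are the rank $1$ morphisms $X\to A$ and a finite set of them spans a simplex precisely when those morphisms are jointly the first factors of a single morphism out of $X$, and $\mathrm{Lk}^-(X)$ is the dual complex built from the rank $1$ morphisms into $X$ via last factors. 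Since a join of simplicial complexes is flag if and only if each factor is, every link in $\widetilde{B\cC}$ is flag if and only if $\mathrm{Lk}^+(X)$ and $\mathrm{Lk}^-(X)$ are flag for every object $X$.

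The next step is to translate flagness into the pairwise conditions. By definition $\mathrm{Lk}^+(X)$ is flag if and only if any $n$ of its vertices that are pairwise joined by edges span an $(n-1)$-simplex; unwinding the descriptions above, this is exactly the assertion that $n$ rank $1$ morphisms out of $X$ are the first factors of a rank $n$ morphism whenever every pair of them is the first factors of a rank $2$ morphism. The reverse implication in condition (1)---that the first factors of a rank $n$ morphism are pairwise the first factors of rank $2$ morphisms---is immediate from Definition \ref{def: cubical category}(2) applied to the rank $2$ sub-factorizations of that morphism. Hence ``$\mathrm{Lk}^+(X)$ flag for all $X$'' is equivalent to condition (1), and dually ``$\mathrm{Lk}^-(X)$ flag for all $X$'' is equivalent to condition (2). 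In particular, if $B\cC$ is locally $CAT(0)$ then $\widetilde{B\cC}$ is $CAT(0)$, hence non-positively curved, so conditions (1) and (2) must hold; this gives their necessity.

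It remains, for the sufficiency, to pass from ``$\widetilde{B\cC}$ is a simply connected non-positively curved cube complex, hence $CAT(0)$'' to ``$B\cC$ is locally $CAT(0)$'', and this is where condition (3) enters: a faithful functor $\cC\to G$ realizes $\widetilde{B\cC}$ as a genuine cube complex on which $\pi_1(B\cC)$ acts freely and properly discontinuously by cubical isometries (equivalently, it makes the locally cubical space $B\cC$ developable, with no pathological self-identifications of cubes), so that the $CAT(0)$ metric on $\widetilde{B\cC}$ descends to a locally $CAT(0)$ metric on $B\cC$. I expect the main obstacle to be precisely the link identification of the second step: checking that at a fixed object $X$ the cubes through $X$ fit together, with no folding and no overlaps larger than a common face, into the join $\mathrm{Lk}^-(X)\ast\mathrm{Lk}^+(X)$, which is exactly where the rigidity encoded in Definition \ref{def: cubical category}(2)--(4) and the faithfulness of $\cC\to G$ both get used; granted this, the theorem follows from Gromov's theorem together with the bookkeeping above.
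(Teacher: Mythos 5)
The paper does not actually prove this theorem---it is imported verbatim from \cite{NonX}, with the remark that it was proved there by applying Gromov's theorem \cite{Gromov87} to the cubical classifying space---and your reconstruction (links at a vertex decompose as the join of a first-factor complex and a last-factor complex, flagness of these is exactly conditions (1) and (2), and the faithful group functor handles developability so that the local $CAT(0)$ metric is genuine) is exactly the strategy the paper attributes to that reference. So this is essentially the same approach as the cited proof, correctly outlined.
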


It is immediate that the cluster morphism category satisfies the first condition since the objects $T_i\in\cC(\cW)$ in a morphism $[T]:\cW\to\cW'$ are given by the pairwise compatibility condition that they do not extend each other. In this section we will verify the second condition for $\Lambda$ of finite type and of tame type with only small tubes. For this we use the following result of Speyer and Thomas.

\begin{thm}\cite{ST}\label{thm: ST}
The dimension vectors $\undim X_i\in\ZZ^n$ of a collection of $n$ exceptional modules and shifted exceptional modules $X_i$ (with underlying modules $M_i$) form the $c$-vectors of a cluster tilting object for an hereditary algebra $\Lambda$ if and only if the following two conditions hold.
\begin{enumerate}
\item Objects $X_i,X_j$ of the same sign are $\Hom$-orthogonal.
\item The modules $M_i$ form an exceptional sequence with the negative modules to the right of the positive modules.
\end{enumerate}
\end{thm}

The second conditions is equivalent to the following three conditions.
\begin{enumerate}
\item The positive modules $X_i$ can be arranged in an exceptional sequence.
\item The underlying modules $M_j$ of the shifted objects $X_j=M_j[1]$ can be arranged in an exceptional sequence.
\item For every $X_i=M_i$ and $X_j=M_j[1]$, $(M_i,M_j)$ forms an exceptional pair. 
\end{enumerate}

\begin{cor}\label{hom orth}
$c$-vectors for $\Lambda$ are given by a pairwise compatibility condition if and only if the following condition is satisfied:

$(\ast)$ Every collection of $\Hom$-orthogonal exceptional modules having the property that any pair is an exceptional pair can be arranged in an exceptional sequence.
\end{cor}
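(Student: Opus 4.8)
The plan is to prove Corollary \ref{hom orth} by unwinding Speyer--Thomas (Theorem \ref{thm: ST}) together with the three-part reformulation of its second condition, and then matching both sides against the meaning of ``pairwise compatibility'' for $c$-vectors.

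First I would set up the correspondence precisely. By Theorem \ref{thm: ST}, a collection $X_1,\dots,X_n$ of exceptional and shifted-exceptional objects (with underlying modules $M_i$) gives the $c$-vectors of a cluster tilting object iff (1) objects of the same sign are $\Hom$-orthogonal and (2) the $M_i$ form an exceptional sequence with negatives to the right of positives, the latter being equivalent to the three bulleted conditions: the positive $X_i$ can be ordered into an exceptional sequence, the underlying modules of the negative $X_j$ can be ordered into an exceptional sequence, and every mixed pair $(M_i, M_j)$ with $X_i$ positive and $X_j=M_j[1]$ negative is an exceptional pair. Now ``$c$-vectors are given by a pairwise compatibility condition'' means exactly that a collection satisfies (1)+(2) as soon as every pair of the $X_i$ does. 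So the statement to prove is: the full Speyer--Thomas condition is pairwise-checkable iff condition $(\ast)$ holds.

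For the forward direction, I would observe that conditions (1), the mixed-pair condition, and the negative-exceptional-sequence condition are automatically pairwise in nature once the positive ones are: $\Hom$-orthogonality is a condition on pairs; an exceptional pair is a condition on pairs; and for the subcollection of negative objects, one applies $(\ast)$ to the underlying modules $M_j$ — they are pairwise $\Hom$-orthogonal (by the same-sign clause of (1), checked pairwise) and pairwise exceptional pairs (since $X_j,X_j'$ of the same sign being the $c$-vectors of a rank $2$ cluster tilting object forces $(M_j,M_j')$ or $(M_j',M_j)$ to be an exceptional pair), hence $(\ast)$ arranges them in an exceptional sequence. The identical argument applies to the positive objects. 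Thus under $(\ast)$, pairwise compatibility of all the $X_i$ upgrades to the full condition, so $c$-vectors are pairwise-checkable. Conversely, if $c$-vectors are pairwise-checkable, take any $\Hom$-orthogonal family of exceptional modules that is pairwise exceptional; regard them all as positive objects $X_i=M_i$; each pair forms the $c$-vectors of a rank $2$ cluster tilting object (this is the pairwise data), so by hypothesis the whole family does, whence by Theorem \ref{thm: ST}(2) the $M_i$ form an exceptional sequence — which is exactly $(\ast)$.

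The step I expect to be the main obstacle is the bookkeeping translating ``the modules $M_i$ form an exceptional sequence with negatives to the right of positives'' into a genuinely pairwise statement — in particular, verifying that the only genuinely non-pairwise ingredient is the existence of a single linear order on the positive (resp.\ negative) part realizing all the pairwise exceptional-pair relations simultaneously, and that $(\ast)$ is precisely what supplies that order. One must be careful that an ``exceptional sequence'' requires $\Hom(M_i,M_j)=0=\Ext^1(M_i,M_j)$ for $i<j$ in the chosen order (not merely that each pair can be oriented one way or the other), so the content of $(\ast)$ is the compatibility of these local orientations into a global one; I would make explicit that pairwise exceptionality of a $\Hom$-orthogonal family means each pair has vanishing $\Ext^1$ in one direction, and that consistency of these directions is the non-trivial combinatorial input. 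Everything else reduces to the fact, already used implicitly in the excerpt, that rank $2$ cluster tilting objects are governed by $\Hom$-orthogonality (same sign) and exceptional pairs (mixed sign or within a sign).
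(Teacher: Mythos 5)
Your argument is correct and is exactly the unwinding that the paper intends: the corollary is stated there without proof as an immediate consequence of Theorem \ref{thm: ST} and the three-condition reformulation of its second hypothesis, and your careful check that every ingredient except the global ordering of the positive (resp.\ negative) part is already pairwise, with $(\ast)$ supplying precisely that ordering, is the implicit argument. No gaps.
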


\begin{cor}
The cluster morphism category of $\Lambda$ satisfies the second condition in Theorem \ref{thm: conditions for CAT0} if an only if it satisfies Condition $(\ast)$ in Corollary \ref{hom orth} holds.
\end{cor}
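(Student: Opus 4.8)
The plan is to rewrite condition~(2) of Theorem~\ref{thm: conditions for CAT0} for $\cX(\Lambda)$ as a statement about $c$-vectors and then invoke Corollary~\ref{hom orth}. First I would spell out what the last factors of a cluster morphism are. If $[R]:\cW\to\cW'$ has rank $n$ with $R=\{R_1,\dots,R_n\}\subseteq\cC_\cW$, then, as in the proof of Theorem~\ref{thm: CMC is cubical}, its last factors are the rank~$1$ morphisms $[L_i]:\cW_i\to\cW'$, where $\cW_i=\cW\cap(\{R_j:j\neq i\})^{\perp}$ has corank $1$ in $\cW$ and $L_i\in\cC_{\cW_i}$ is the unique element with $\sigma_{\{R_j:\,j\neq i\}}(L_i)=R_i$, equivalently $L_i^{\perp}\cap\cW_i=\cW'$. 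Conversely a family $[L_i]:\cW_i\to\cW'$, $i=1,\dots,n$, is the set of last factors of some rank~$n$ morphism into $\cW'$ if and only if it extends to such an $R$, the source then being the wide subcategory generated by $\bigcup_i\cW_i$.

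The crux is to match these last factors with $c$-vectors; I would do this by reducing to $\cW'=0$. Fix a cluster tilting set $T'$ of $\cW'$. By the composition law of Definition~\ref{def: cluster morphism category}, $\widehat R:=R\coprod\sigma_R(T')$ is a cluster tilting object of $\cW$ and $[\widehat R]:\cW\to 0$ factors as $\cW\xrightarrow{[R]}\cW'\xrightarrow{[T']}0$; since $\cW'$, and hence $T'$, is held fixed, $R$ is recovered from $\widehat R$ as the unique subset whose perpendicular category in $\cW$ is $\cW'$. Unwinding conditions (1)--(3) of Definition~\ref{def: cluster morphism category} for the maps $\sigma$, the $c$-vectors of $\widehat R$ for the hereditary algebra $H_\cW$ that are associated to the $R_i$ are exactly the signed dimension vectors $\undim L_i$, while the remaining $rk\,\cW'$ of them are the $c$-vectors of $T'$ computed inside $\cW'$; the relations linking these two blocks are forced by $\cW'=L_i^{\perp}\cap\cW_i$, which is the algebraic content of the Speyer--Thomas conditions across the blocks. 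Hence $[L_1],\dots,[L_n]$ are the last factors of a rank~$n$ morphism with target $\cW'$ if and only if $\{\undim L_1,\dots,\undim L_n\}$ together with the $c$-vectors of $T'$ form the full $c$-vector collection of a cluster tilting object of $H_\cW$; in the case $\cW'=0$ this just says $\{\undim L_i\}$ is a set of $c$-vectors for $H_\cW$.

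Now I would apply Theorem~\ref{thm: ST} pairwise. The pairwise hypothesis in condition~(2) says every pair $[L_i],[L_j]$ is the set of last factors of a rank~$2$ morphism into $\cW'$, which by the previous step is exactly the statement that each $\{\undim L_i,\undim L_j\}$ satisfies the two-element case of the conditions in Theorem~\ref{thm: ST}, while the desired conclusion says all $n$ dimension vectors do. Therefore condition~(2) holds for $\cX(\Lambda)$ precisely when, for every finitely generated wide subcategory $\cW$ of $mod\text-\Lambda$, the $c$-vectors of $H_\cW$ are determined by pairwise compatibility; by Corollary~\ref{hom orth} applied to each $H_\cW$ this is equivalent to Condition~$(\ast)$ holding for every $H_\cW$. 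Finally, because a wide subcategory $\cW$ of $mod\text-\Lambda$ is an exactly embedded, extension-closed abelian subcategory, the notions of exceptional object, $\Hom$-orthogonality, exceptional pair and exceptional sequence in $\cW\simeq mod\text-H_\cW$ agree with the corresponding notions in $mod\text-\Lambda$; so $(\ast)$ for $\Lambda$ implies $(\ast)$ for every $H_\cW$, and the converse is the special case $\cW=mod\text-\Lambda$. This yields the asserted equivalence.

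The step I expect to be the main obstacle is the middle one: making precise the dictionary ``last factors $\leftrightarrow$ $c$-vectors'', in particular showing that the last factors of $[R]:\cW\to\cW'$ occupy the $R$-indexed block of the $c$-vectors of $\widehat R$ and that the relations tying that block to the $T'$-block hold automatically; here the left versus right perpendicular distinction and the bijectivity of $\sigma$ from \cite[Prop.~1.8]{IT13} must be used carefully. Once this dictionary is in hand --- it is implicit in \cite{IT13} together with the characterization of \cite{ST} --- the reductions to $\cW'=0$ and to $(\ast)$ for $\Lambda$, and the appeal to Corollary~\ref{hom orth}, are routine.
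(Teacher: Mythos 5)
Your reduction has a genuine gap, and it sits exactly at the step you flagged: the claimed dictionary between the last factors of $[R]:\cW\to\cW'$ and the $R$-indexed block of the $c$-vectors of $\widehat R=R\coprod\sigma_R(T')$ is false when $\cW'\neq 0$. The obstruction is the asymmetry of the Euler--Ringel form. A last factor $[L_i]:\cW_i\to\cW'$ is characterized by $L_i\in\cW_i=(R\setminus R_i)^\perp\cap\cW$ together with $L_i^\perp\cap\cW_i=\cW'$, so $L_i$ is \emph{left}-orthogonal to $\cW'$, i.e.\ $\left<\undim L_i,\undim T'_k\right>=0$; by contrast the $c$-vector of $\widehat R$ at position $R_i$ is \emph{right}-orthogonal to every other summand of $\widehat R$, including the $\sigma_R(T'_k)$. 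These are different linear conditions. Concretely, for $\Lambda=KA_2$ with quiver $1\leftarrow 2$, take the rank $1$ morphism $[R]=[S_2]:mod\text-\Lambda\to S_2^\perp=\add(P_2)$ and $T'=\{P_2\}$, so $\widehat R=\{S_2,P_2\}$. The unique last factor of $[R]$ is $[S_2]$ itself, with dimension vector $(0,1)$, but the last factor of $[\widehat R]:mod\text-\Lambda\to 0$ at the $S_2$ position is $[S_1[1]]$ with dimension vector $(-1,0)$; so the $R$-block of $c$-vectors is $\{\pm(1,0)\}$, not $\{(0,1)\}$. Hence your asserted equivalence ``$[L_1],\dots,[L_n]$ are last factors of a rank $n$ morphism into $\cW'$ iff $\{\undim L_i\}$ together with the $c$-vectors of $T'$ form the $c$-vectors of a cluster tilting object of $H_\cW$'' already fails for $n=1$, and the subsequent pairwise application of Speyer--Thomas does not go through. (The case $\cW'=0$, which is all that is needed for necessity, is fine and is exactly the citation of \cite[Theorem 4.3.1]{IOTW3} in the paper.)

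The paper's sufficiency argument avoids this by reducing on the other side. Since every last factor $L_i$ of a morphism into $\cW'$ satisfies $L_i^\perp\cap\cW_i=\cW'$, all the $L_i$ lie in the \emph{left} perpendicular category $^\perp\cW'$, and the same objects define pairwise compatible rank $1$ morphisms $\cW_i\cap{}^\perp\cW'\to 0$ there; Condition $(\ast)$ (via Speyer--Thomas) produces a morphism $[R]:\cW''\to 0$ with these last factors, and the linear equations $\left<\undim R_i,\undim L_j\right>=\delta_{ij}\left<\undim L_j,\undim L_j\right>$ show that the same set $R$ defines a morphism into $\cW'$ whose last factors are the $[L_i]:\cW_i\to\cW'$. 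If you wanted to keep your strategy, you would have to replace $\undim L_i$ by the dimension vector of the genuine dual object $\widehat L_i$, which differs from $L_i$ by a $\sigma$-twist depending on $T'$ and on the whole collection; but then the pairwise hypothesis on the $[L_i]$ no longer translates into a pairwise condition on the $\widehat L_i$, so the reduction collapses. Your closing observation that $(\ast)$ for $\Lambda$ is equivalent to $(\ast)$ for all $H_\cW$ is correct, but it is not where the difficulty lies.
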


\begin{proof}
The linear equations relating first factors $R_i$ and last factors $L_j$ of a cluster morphism $R=[R_1,\cdots,R_k]$ were given in the proof of Theorem \ref{thm: CMC is cubical}:
\[
	\left< \undim R_i,\undim L_j\right>= \begin{cases} \dim_K\End(L_i) & \text{if } i=j\\
   0 & \text{otherwise}
    \end{cases}
\]
By \cite[Theorem 4.3.1]{IOTW3} these equations imply that the last factors of any cluster morphism $[R]:mod\text-\Lambda\to 0$ have, as dimension vectors, the negatives of the $c$-vectors of the cluster tilting object $R$ of $\Lambda$. By the previous Corollary \ref{hom orth}, Condition $(\ast)$ is necessary for these last factors to be given by a pairwise compatibility condition as required by (2) in Theorem \ref{thm: conditions for CAT0}. 

To show sufficiency, suppose that $(\ast)$ holds and $[L_i]:\cW_i\to \cW$ is a collection of rank 1 cluster morphisms which are pairwise compatible in the sense that any pair $[L_i],[L_j]$ form the last factors of a rank 2 cluster morphism $[L_{ij}]:\cW_{ij}\to \cW$. Then each object $L_i$ is left perpendicular to $\cW$, i.e., lies in the wide subcategory $\,^\perp\cW$ and the same objects $L_i$ give rank 1 morphisms $[L_i]:\cW_i\cap\,^\perp\cW\to0$ which are pairwise compatible. Thus, by $(\ast)$ they form the last factors of some morphism $[R]:\cW'\to 0$ where $\cW'\subset \,^\perp\cW$. Equivalently,
\[
	\left<\undim R_i,\undim L_j\right>=\delta_{ij} \left<\undim L_j,\undim L_j\right>
\]
which implies that $[R]$ gives a morphism $\cW''\to \cW$ with last factors $[L_i]:\cW_i\to \cW$. Thus condition (2) in Theorem \ref{thm: conditions for CAT0} holds.
\end{proof}

It remains to determine when Condition $(\ast)$ holds.

\begin{thm}\label{old thm B}
Let $\Lambda$ be an hereditary algebra which is either of finite or tame representation type. Then Condition $(\ast)$ in Corollary \ref{hom orth} holds if and only if the Auslander-Reiten quiver of $\Lambda$ has no tubes of rank $\ge3$.
\end{thm}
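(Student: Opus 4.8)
The plan is to recast Condition~$(\ast)$ as an acyclicity statement about a directed graph and then to identify exactly which algebras carry the obstructing configuration. To a finite collection $X_1,\dots,X_k$ of $\Hom$-orthogonal exceptional modules in which every pair $\{X_i,X_j\}$ forms an exceptional pair in one order or the other, attach the \emph{$\Ext$-digraph} $\Gamma$ on the vertex set $\{X_1,\dots,X_k\}$ with an arrow $X_i\to X_j$ precisely when $\Ext_\Lambda^1(X_i,X_j)\ne0$; it has no loops (the $X_i$ are rigid) and no $2$-cycles (each pair is an exceptional pair and the modules are $\Hom$-orthogonal). Because $\Hom_\Lambda(X_j,X_i)=0$ for $i\ne j$, a linear ordering of $X_1,\dots,X_k$ is an exceptional sequence if and only if every arrow of $\Gamma$ points forward in that ordering; so the collection can be arranged into an exceptional sequence if and only if $\Gamma$ is acyclic, and $(\ast)$ is equivalent to the statement that the $\Ext$-digraph of every such collection is acyclic. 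In particular, if $(\ast)$ fails, then a directed cycle of minimal length in some such $\Gamma$ yields $m\ge3$ pairwise distinct, pairwise $\Hom$-orthogonal exceptional modules $X_0,\dots,X_{m-1}$ with $\Ext_\Lambda^1(X_i,X_{i+1})\ne0$ for all $i$ (indices modulo~$m$).

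For the ``only if'' direction of the theorem --- a tube of rank $\ge3$ forces $(\ast)$ to fail --- I would exhibit such a configuration directly, using the quasi-simple regular modules $S_0,\dots,S_{r-1}$ of a tube of rank $r\ge3$. Distinct quasi-simples of a tube are $\Hom$-orthogonal, and the Auslander--Reiten formula --- which for modules in a tube reads $\Ext_\Lambda^1(X,Y)\cong D\Hom_\Lambda(Y,\tau X)$ --- together with a count of quasi-composition factors shows that each $S_i$ is rigid, hence exceptional, and that $\Ext_\Lambda^1(S_i,S_j)\ne0$ exactly when $S_j=\tau S_i$. Since $r\ge3$, this never holds in both directions, so every pair $\{S_i,S_j\}$ is an exceptional pair, while the $\Ext$-digraph of $\{S_0,\dots,S_{r-1}\}$ is the directed $r$-cycle $S_0\to\tau S_0\to\tau^2 S_0\to\cdots\to S_0$ and is not acyclic; hence $(\ast)$ fails.

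For the converse I would start from a minimal configuration $X_0,\dots,X_{m-1}$ as above and show it forces a tube of rank $\ge3$, which is where the finite-or-tame hypothesis is used, through the structure theory of indecomposables. No $X_i$ is projective (projectives have no outgoing extensions) or injective (injectives have no incoming extensions), so $\tau X_i$ and $\tau^{-1}X_i$ are defined, and each arrow $X_i\to X_{i+1}$ gives, via the two forms of the Auslander--Reiten formula, nonzero maps $X_{i+1}\to\tau X_i$ and $\tau^{-1}X_{i+1}\to X_i$. If some $X_j$ were preprojective, then $\tau X_j$ would be preprojective and, as nothing regular or preinjective maps nonzero to a preprojective, $X_{j+1}$ would be preprojective; iterating around the cycle, all $X_i$ would be preprojective, whence $X_{i+1}\preceq\tau X_i\prec X_i$ in the partial order of the directed preprojective component and $X_0\succ X_1\succ\cdots\succ X_0$, which is impossible. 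The preinjective case is dual, and in the Dynkin case every indecomposable is preprojective, so the contradiction already appears here. Hence all the $X_i$ are regular. Then none of them lies in a homogeneous tube (whose quasi-simple is not rigid and is its only brick), and since distinct tubes admit no extensions between them, each arrow $X_i\to X_{i+1}$ forces $X_i$ and $X_{i+1}$ into a common tube; chaining, all the $X_i$ lie in a single non-homogeneous tube $T$, of rank $r$ say. But a tube of rank $r$ contains only $r(r-1)$ exceptional modules --- those of quasi-length at most $r-1$ --- so $m\ge3$ pairwise distinct such modules force $r(r-1)\ge3$, i.e.\ $r\ge3$, contradicting the hypothesis.

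The step I expect to be the main obstacle is this structural localization: getting fully correct the vanishing of $\Hom$ and $\Ext^1$ among the preprojective, regular, and preinjective parts and between distinct tubes, the use of both forms of the Auslander--Reiten formula for modules that a priori might be projective or injective, and the exact directedness properties of the preprojective and preinjective components. The reduction to the $\Ext$-digraph and the explicit quasi-simple example should be routine by comparison.
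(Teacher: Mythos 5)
Your proposal is correct, and its first half coincides with the paper's: the ``only if'' direction is proved in both cases by exhibiting the quasi-simples at the mouth of a rank $\ge 3$ tube as a pairwise-exceptional, $\Hom$-orthogonal family that cannot be linearly ordered because the extensions $\Ext^1_\Lambda(S_i,\tau^{-1} S_i)\neq 0$ form a cycle. For the converse the two arguments genuinely diverge. The paper argues constructively: it observes that (under the small-tube hypothesis) each tube contributes at most one module to the collection, and then writes down an explicit ordering --- preinjectives first, ordered right-to-left in the preinjective component, then the regular modules, then the preprojectives right-to-left --- and asserts this is exceptional because ``extensions go right-to-left in the AR-quiver.'' You instead encode the problem in the $\Ext$-digraph, note that $(\ast)$ is exactly acyclicity of that digraph (which is the clean way to see that pairwise compatibility is the only issue), and rule out a minimal directed cycle by contradiction: directedness of the preprojective and preinjective components expels the cycle from those parts, orthogonality of distinct tubes traps it in a single tube, and the count of $r(r-1)$ exceptional modules in a rank $r$ tube forces $r\ge 3$. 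Both routes rest on the same structure theory (the $\Hom$/$\Ext$ flow between the preprojective, regular and preinjective classes, directedness, and orthogonality of tubes), but your version makes explicit the verifications the paper leaves implicit --- in particular why the proposed ordering of the paper actually is an exceptional sequence, and why homogeneous and rank $2$ tubes cannot carry the obstruction --- at the cost of being less direct. One small point worth recording in your write-up: the minimal cycle has length $m\ge 3$ precisely because rigidity kills loops and the exceptional-pair hypothesis kills $2$-cycles; this is where the hypothesis that every pair is exceptional (and not merely $\Hom$-orthogonal) enters your converse.
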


\begin{proof}
If $\Lambda$ has a tube of rank $\ge3$ then $(\ast)$ fails since the quasi-simple objects of the mouth of this tube are $\Hom$ orthogonal and any pair of them forms an exceptional sequence, but, taken together, they do not form an exceptional sequence since they extend each other in a cycle.

Conversely suppose that $\Lambda$ is either of finite type or tame with only small tubes (of rank $\le2$). Let $M_i$ be a collection of $\Hom$-orthogonal exceptional $\Lambda$ modules so that any two form an exceptional pair. Then, each exceptional tube can have at most one of the objects $M_i$. We can then arrange these module by taking first the preinjective modules, arranged in order of distance from the injective modules, i.e., from right-to-left, in the preinjective component, then taking the regular modules and finally the preprojective modules in decreasing distance from the projective modules, i.e., right-to-left in the preprojective component. In finite type, we simply arrange the modules right to left in the AR-quiver. This produces an exceptional sequence since extensions go right-to-left in the AR-quiver (or around the tubes).
\end{proof}


%
%

\section{Faithful group functor}

In this section we complete the proof that the cluster morphism category is $CAT(0)$ by constructing a faithful group functor to the picture group $G(\Lambda)$ for all hereditary algebras $\Lambda$ of finite or tame type. We view $G(\Lambda)$ as a groupoid with one object and we construct a faithful contravariant functor
\[
	F: \cX(\Lambda)\to G(\Lambda)
\]
This is equivalent to the faithful covariant functor $F'([X])=F([X])^{-1}$ since inversion makes every groupoid isomorphic to its opposite category. 


\subsection{Formula for the group functor}

Recall first that the picture group $G(\cW)$ of any finitely generated wide subcategory $\cW$ of $mod\text-\Lambda$ is naturally embedded as a subgroup of the picture group $G(\Lambda)$ since the generators of $G(\cW)$ are $x(\beta)$ for all dimension vectors $\beta$ of exceptional modules $M_\beta\in\cW$ and these are some of the generators of $G(\Lambda)$. Furthermore, any relations among these $x(\beta)$ are relations which hold in $G(\Lambda)$ since $\cW$ is extension closed in $mod\text-\Lambda$. Thus we have a canonical homomorphism 
\[
\varphi_\cW:G(\cW)\to G(\Lambda).
\]
Similarly, for any $\cW'\subset \cW$, we have a homomorphism $\varphi_{\cW'}^\cW:G(\cW')\to G(\cW)$ so that 
\[
\varphi_{\cW'}=\varphi_{\cW}\circ \varphi_{\cW'}^\cW:G(\cW')\to G(\Lambda).
\]
With this notation we will prove the following.

\begin{thm}\label{thm C} Let $\Lambda$ be an hereditary algebra of finite or tame type. Then, to every morphism $[X]:\cW\to \cW'$ in the cluster morphism category of $\Lambda$, there is $\gg_\cW[X]\in G(\cW)$ with the property that, for all morphisms $[Y]:\cW'\to \cW''$, we have
\[
	\gg_\cW([Y]\circ[X])=\gg_{\cW}[X]\,\varphi_{\cW'}^{\cW}(\gg_{\cW'}[Y]).
\] 
Furthermore, for any two distinct morphisms $[X]\neq [X']:\cW\to \cW'$, the corresponding group elements are distinct: $\varphi_\cW(\gg_\cW[X])\neq \varphi_\cW(\gg_\cW[X'])\in G(\Lambda)$. 
\end{thm}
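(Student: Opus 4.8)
The plan is to construct $\gg_\cW[X]$ explicitly from a factorization of $[X]$ into rank-one morphisms, i.e., into a \emph{signed exceptional sequence}, and to define the group element as an ordered product of generators $x(\beta)$ (or their inverses, for shifted objects) associated with the dimension vectors appearing in that factorization. Concretely, if $[X]:\cW\to\cW'$ has rank $r$ and $X=\{X_1,\dots,X_r\}$, choose an ordering that realizes $X$ as a signed exceptional sequence in $\cW$; then each $X_i$ has a well-defined dimension vector $\undim X_i\in\ZZ^n$, and I set $\gg_\cW[X]=\prod_i x(\pm\undim X_i)^{\pm1}$ read in the chosen order, where the sign convention matches the one used to define the generators and relations of the picture group $G(\cW)$ (as in \cite{IT13}, \cite{ITW}). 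The first task is \textbf{well-definedness}: different orderings of the same signed exceptional sequence, and different choices of signed exceptional sequence factoring the same cluster morphism, must give the same element of $G(\cW)$. This is exactly where the picture-group relations enter: the ``commutation''-type relations of $G(\cW)$ are designed so that reordering an exceptional pair that is $\Hom$- and $\Ext$-orthogonal leaves the product unchanged, and the cube structure from Theorem \ref{thm: CMC is cubical} guarantees that any two maximal factorizations of $[X]$ differ by a sequence of such elementary reorderings (the $n!$ orderings of the $n$-cube). So well-definedness reduces to checking that each elementary square in the cube corresponds to a defining relation of $G(\cW)$.

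Next I would verify the \textbf{cocycle/composition formula} $\gg_\cW([Y]\circ[X])=\gg_\cW[X]\,\varphi_{\cW'}^\cW(\gg_{\cW'}[Y])$. Given a signed exceptional sequence factoring $[X]$ in $\cW$ and one factoring $[Y]$ in $\cW'$, the definition of composition in the cluster morphism category (Definition \ref{def: cluster morphism category}) says $[Y]\circ[X]=[R]$ with $R=X\coprod\sigma_X(Y)$; moreover, concatenating the factorization of $[X]$ with the $\sigma_X$-image of the factorization of $[Y]$ yields a signed exceptional sequence factoring $[R]$. The content of the formula is then that $x(\pm\undim\sigma_X Y_j)^{\pm1}$, as an element of $G(\cW)$, equals $\varphi_{\cW'}^\cW$ applied to $x(\pm\undim Y_j)^{\pm1}\in G(\cW')$ — i.e., that the map $\sigma_X$ on $c$-vectors is compatible with the inclusion of picture groups. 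Since $\varphi_{\cW'}^\cW$ is induced by realizing $\cW'=X^\perp\cap\cW$ as an extension-closed subcategory and $\sigma_X$ adjusts dimension vectors by integer combinations of the $\undim X_i$ (condition (2) of Definition \ref{def: cluster morphism category}), this should follow from the way $G(\cW')$ embeds: the generators of $G(\cW')$ are, up to the $\sigma_X$-correction, products of generators of $G(\cW)$, and this is precisely the relation needed. I expect one must be careful that the \emph{order} of the concatenated sequence is consistent — the factors coming from $[X]$ should all precede those coming from $[Y]$ — which is automatic because $\sigma_X(Y_j)\in X^\perp$.

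Finally, for \textbf{faithfulness on a fixed hom-set}, suppose $[X]\neq[X']:\cW\to\cW'$ and $\varphi_\cW(\gg_\cW[X])=\varphi_\cW(\gg_\cW[X'])$ in $G(\Lambda)$. Applying the composition formula with $[Y]$ ranging over morphisms out of $\cW'$ all the way down to $0$, one reduces to the case $\cW'=0$, so $X$ and $X'$ are cluster-tilting sets (signed exceptional sequences of full length) and the claim is that the ordered product in $G(\Lambda)$ determines the underlying set of $c$-vectors, hence the cluster-tilting object, hence $[X]$. Here I would invoke the normal form / faithfulness results for the picture group — the fact (from \cite{ITW} in type $A$, and the general hereditary finite/tame case which this section is establishing) that distinct maximal signed exceptional sequences give distinct elements of $G(\Lambda)$, which is what makes the picture space a $K(\pi,1)$. \textbf{The main obstacle} is precisely this last faithfulness input: proving that the assignment from (unordered) cluster-tilting sets, read as ordered products, is injective into $G(\Lambda)$ for all finite and tame hereditary $\Lambda$ requires a genuine understanding of the relations in $G(\Lambda)$ — most naturally via the $CAT(0)$ geometry of the universal cover, or via an explicit rewriting system on signed exceptional sequences — and it is here that the tame-with-small-tubes hypothesis, and Condition $(\ast)$, do real work rather than being formal. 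Everything before that is bookkeeping with the cube structure and the definition of composition.
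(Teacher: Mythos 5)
Your proposed definition of $\gg_\cW[X]$ is not the right object, and this is a genuine gap rather than a stylistic difference. You set $\gg_\cW[X]=\prod_i x(\pm\undim X_i)^{\pm1}$, a product with one factor per component of $X$. But the group element attached to $[X]:\cW\to\cW'$ must record \emph{all} the walls crossed on a mutation path from the initial cluster of $\cW$ to the completed cluster $X\coprod\sigma_X[P_{\cW'}[1]]$; it is a product of generators indexed by the $c$-vectors of that mutation path, of which there are in general far more than $rk\,[X]$, and these are not the dimension vectors of the $X_i$. The paper's own $A_3$ example already refutes your formula: for the rank-one morphism $[P_2]:mod\text-\Lambda\to P_2^\perp$ the correct value is $\gg(P_2)=x(S_2)x(P_2)$ (two walls, $D(S_2)$ and $D(P_2)$, are crossed), not $x(P_2)$; and with your values the required identity $\gg(P_2,P_3)=\gg(P_2)\,\varphi_\cW(\gg_\cW(S_3))$ fails, the left side being a word in $x(P_2),x(P_3)$ and the right side $x(P_2)x(S_3)$. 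Relatedly, your well-definedness argument (that the squares of the factorization cube correspond to defining relations of $G(\cW)$) does not work at the level of single generators: the defining relations are $[x_{\beta_i},x_{\beta_j}]=x_{\gamma_1}\cdots x_{\gamma_r}$ with extension terms $\gamma_k$, not plain commutations of orthogonal pairs. The paper instead \emph{defines} $\gg_\cW[X]:=\overline\gg_\cW([X]\coprod\sigma_X[P[1]])$ via the compartment labeling $\overline\gg$ of full clusters, proves the cocycle formula first for $rk\,[X]=1$ by identifying the picture of $\cW$ with the local picture of $\Lambda$ at the vertex $X$, and then induces on rank.

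Second, the injectivity statement --- that distinct morphisms $\cW\to\cW'$ receive distinct group elements --- is the real content, and you only cite it (``invoke the normal form / faithfulness results \dots which is what makes the picture space a $K(\pi,1)$''). That is close to circular: this section of the paper exists precisely to establish that input. The paper proves it in Proposition \ref{prop: group labels of compartments}(3) by labeling the compartments cut out by finitely many walls $D(\beta_1),\dots,D(\beta_k)$ (roots ordered by length) and inducting on $k$ using the retraction $\rho:G(\cS_{k+1})\to G(\cS_k)$ that kills $x(\beta_{k+1})$: two compartments with equal labels must lie in the same stage-$k$ compartment, hence are separated only by $D(\beta_{k+1})$, whence their labels differ by the factor $x(\beta_{k+1})\neq e$. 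Some argument of this kind is needed; without it, and with the incorrect formula for $\gg_\cW[X]$, the proposal does not amount to a proof.
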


In other words, $F([X])=\varphi_\cW(\gg_\cW[X])^{-1}$ is a $G(\Lambda)$ valued faithful group functor on the cluster morphism category of $\Lambda$.

This theorem, assigning a picture group element to every partial cluster tilting object of $\cW$, will follow from the special case of (complete) cluster tilting objects, the case which is already known by \cite{ITW}, \cite{IT13}, \cite{IT14} and \cite{HInogap}.

\begin{prop}\label{prop: group labels of compartments}
For any hereditary algebra $\Lambda$ of finite or tame representation type there is a function $\overline\gg$ which assigns to every cluster tilting object $T=T_1\oplus\cdots\oplus T_n$ an element $\overline\gg(T)$ in the picture group $G(\Lambda)$ satisfying the following properties.
\begin{enumerate}
\item For the cluster tilting object $P[1]$ consisting of the $n$ shifted projective modules, $\overline\gg[P[1]]=e\in G(\Lambda)$, the identity element.
\item If $T'=T\backslash T_k\oplus T_k'$ is the mutation of $T$ in direction $k$,
\[
	\overline\gg[T']=\overline\gg[T]x(\beta)^\varepsilon
\]
when $D(\beta)$ is the wall separating the two clusters, i.e., $M_\beta$ is the unique indecomposable object in $(T\backslash T_k)^\perp$. The sign $\varepsilon$ is positive when $T_k'$ is on the positive side of $D(\beta)$, i.e., when $\Hom_\Lambda(T_k',M_\beta)\neq0$, and $\varepsilon=-1$ otherwise.
\item If $T\not\cong T'$ then $\overline\gg(T)\neq \overline\gg(T')\in G(\Lambda)$.
\end{enumerate}
\end{prop}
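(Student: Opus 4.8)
The plan is to reduce the statement to known results about picture groups and semi-invariant pictures, and then to handle the three properties separately. Property (1) is essentially a normalization: the cluster tilting object $P[1]$ of all shifted projectives corresponds to the ``base chamber'' of the semi-invariant picture (the domain of dominant weights), and one simply \emph{defines} $\overline\gg$ so that this chamber is labelled by the identity. Property (2) is where the real content lies, and it is exactly the statement that the group labels of adjacent chambers in the semi-invariant picture differ by the generator $x(\beta)$ attached to the wall $D(\beta)$ between them, with the sign determined by which side the new cluster summand $T_k'$ lies on. For finite type this is the main construction of \cite{IT13}; for tame type the corresponding statement about the semi-invariant picture and its chambers is \cite{HInogap} (together with \cite{IT14}). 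So the first two properties follow by citing these papers once we identify clusters with chambers via the map $T\mapsto$ (the chamber of weights $\theta$ with $\langle\theta,\undim T_i\rangle$ of the appropriate sign for all $i$).

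For property (3) — that distinct clusters get distinct group elements — I would argue as follows. The labels $\overline\gg(T)$ are by construction the ``monodromy'' labels of the chambers of the semi-invariant picture $L(\Lambda)$: fixing a basepoint in the chamber of $P[1]$, $\overline\gg(T)$ is the product of the wall-crossing generators $x(\beta)^{\pm1}$ along any path from the basepoint to the chamber of $T$, and this is well-defined precisely because the defining relations of the picture group $G(\Lambda)$ are the relations coming from codimension-2 ``atoms'' of the picture (this is the content of the fact that $L(\Lambda)$ is the $2$-skeleton presenting $G(\Lambda)$, or equivalently that the picture space is simply connected with $\pi_1$-relations given by the atoms — see \cite{ITW} for $A_n$ and \cite{IT13},\cite{IT14},\cite{HInogap} in general). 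Given this, two chambers have the same label only if there is a closed loop in the picture from one to the other whose monodromy is trivial; since the picture is a genuine cell complex whose chambers are the top cells and whose complement deformation retracts appropriately, distinct chambers are separated by at least one wall, and the standard argument (the picture group acts freely on the universal cover of the picture space, whose chambers are in bijection with clusters) shows the labels are distinct. Concretely: the picture space has $\pi_1=G(\Lambda)$ and its universal cover has chamber set in bijection with $G(\Lambda)\times\{\text{clusters}\}$ modulo the free $G(\Lambda)$-action, so the chambers downstairs biject with clusters and their $G(\Lambda)$-labels are distinct.

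The main obstacle I anticipate is not any single property in isolation but the bookkeeping needed to \emph{reduce} the general (partial) cluster tilting statement of Theorem \ref{thm C} to this complete-cluster-tilting Proposition \ref{prop: group labels of compartments}, and, within the Proposition, making sure that the tame case genuinely has the ``no gap'' property that $L(\Lambda)$ is connected with the expected chamber structure — this is exactly why the hypothesis ``only small tubes'' (equivalently, no tubes of rank $\ge 3$) is needed, matching Theorem \ref{old thm B}, and it is the place where one must invoke \cite{HInogap} rather than argue directly. A secondary technical point is verifying that the sign convention in (2) (positive when $\Hom_\Lambda(T_k',M_\beta)\neq 0$) is consistent across a sequence of mutations, i.e.\ independent of the path of mutations chosen between two clusters; this consistency is again a consequence of the atom relations in $G(\Lambda)$, so it is subsumed by the well-definedness argument used for property (3), but it should be stated explicitly. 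Once these reductions are in place, properties (1)–(3) themselves require essentially no new computation beyond quoting \cite{IT13}, \cite{IT14}, \cite{ITW} and \cite{HInogap}.
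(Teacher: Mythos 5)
Your treatment of (1) and (2) matches the paper: (1) is a normalization, and (2) is quoted from \cite{IT13}, \cite{ITW}, \cite{IT14} in finite type and from \cite{HInogap} in tame type (via the polygonal deformation lemma, which makes the product of wall labels along a mutation path well defined). One correction there: the ``no tubes of rank $\ge 3$'' hypothesis plays no role in this Proposition, which is asserted for \emph{all} finite or tame $\Lambda$; that hypothesis enters only in Theorem \ref{old thm B} (pairwise compatibility of last factors), so your claim that it is ``exactly why'' \cite{HInogap} must be invoked here is a misattribution.

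The genuine gap is in your argument for (3). You deduce distinctness of the labels from the assertion that the picture space has $\pi_1=G(\Lambda)$ and that its universal cover has chamber set $G(\Lambda)\times\{\text{clusters}\}$ with a free $G(\Lambda)$-action. That chamber description of the universal cover is not available at this point: it is essentially equivalent to the injectivity you are trying to prove (it presupposes that the label map from chambers to $G(\Lambda)$ separates chambers, or that the picture space is aspherical), and in the tame case the $K(\pi,1)$ property is precisely what this paper is building toward via Proposition \ref{prop: group labels of compartments} $\Rightarrow$ Theorem \ref{thm: faithful group functors exist} $\Rightarrow$ Theorem \ref{main thm} --- so the argument is circular. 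What is needed, and what the paper supplies, is a direct group-theoretic argument: order the relevant roots $\beta_1,\dots,\beta_k$ by length, filter the picture group by the subgroups $G(\cS_k)$ generated by $x(\beta_i)$, $i\le k$, and use that each inclusion $G(\cS_k)\to G(\cS_{k+1})$ admits a retraction $\rho$ killing $x(\beta_{k+1})$ (this exists because the right-hand sides of the picture group relations involve only longer roots). Then one inducts: if two compartments at stage $k+1$ carry the same label, applying $\rho$ shows they lie in the same stage-$k$ compartment, hence are the two halves $\cU_{\varepsilon\pm}$ of a compartment split by the new wall $D(\beta_{k+1})$, and their labels differ by $x(\beta_{k+1})\neq e$, a contradiction. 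Without some such argument (or an explicit, non-circular citation establishing injectivity of the chamber labeling), property (3) remains unproved in your write-up.
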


\begin{proof}
Statement (1) is by definition and (2) is proved in \cite{IT13}, \cite{ITW}, \cite{IT14} for finite type and follows from \cite{HInogap} in the tame case since, by the polygonal deformation lemma, there is a finite sequence of mutations from any cluster to the initial cluster (of shifted projective objects) which is well-defined up to polygonal deformation and thus has well-defined picture group element associated to it by multiplying the generators $x(\beta)$ for the walls that are crossed. 

To prove (3) we need to review these arguments. We fix the clusters $T,T'$ and choose finite mutation paths from the initial cluster to $T,T'$. We keep track of all the walls of all of the clusters along those two paths. Then, we can enumerate the dimension vectors $\beta_i$ of the exceptional modules in order of length up to and including the lengths of those $\beta$ for which $D(\beta)$ occurs as a wall in one of those two mutation paths. Thus $\beta_1,\cdots,\beta_n$ are the simple roots. Let $\cS_k=\{\beta_1,\cdots,\beta_k\}$ and let $G(\cS_k)$ be the subgroup of the picture group $G(\Lambda)$ generated by $x(\beta_i)$ where $i\le k$. Then, the canonical homomorphism $G(\cS_{k})\to G(\cS_{k+1})$ has a retraction $\rho:G(\cS_{k+1})\to G(\cS_{k})$ given by sending $x(\beta_{k+1})$ to 1.

We recall \cite[Theorem 1.18]{IT14}: The walls $D(\beta_j)$ for $j\le k$ separate $\RR^n$ into convex regions called \emph{compartments} and denote $\cU_\varepsilon$ in \cite{IT14}. To each such region we can associate a picture group element $g_k(\cU_\varepsilon)\in G(\cS_k)$ so that, if two regions $\cU_\varepsilon$, $\cU_{\varepsilon'}$ are adjacent and separated by a wall $D(\beta)$, $\beta\in\cS_k$ with $\cU_{\varepsilon'}$ on the positive side of $D(\beta)$ then
\[
    g_k(\cU_{\varepsilon'})=g_k(\cU_{\varepsilon})\, x(\beta).
\]
(This is statement (2).) Furthermore, these group elements are uniquely determined if we stipulate that the component $\cU_-$ on the negative side of all the walls $D(\beta_j)$ is assigned the identity element of the group: $g_k(\cU_-)=e\in G(\cS_k)$.

For example, when $k=1$, there is only one wall $D(\beta_1)$ which is a hyperplane. This divides $\RR^n$ into two regions $\cU_-$ and $\cU_+$ with $g_1(\cU_-)=e$ and $g_1(\cU_+)=x(\beta_1)$.

When we go from $\cS_k$ to $\cS_{k+1}$ we add the wall $D(\beta_{k+1})$. This being part of a hyperplane will either cut the compartment $\cU_\epsilon$ into two pieces $\cU_{\varepsilon +}$, on the positive side of $D(\beta_{k+1})$, and $\cU_{\varepsilon -}$ on the negative side of $D(\beta_{k+1})$ or $\cU_\epsilon$ will be disjoint from $D(\beta_{k+1})$ in which case we relabel the compartment as $\cU_{\epsilon 0}$. In either case, the retraction $\rho:G(\cS_{k+1})\to G(\cS_k)$ will send the new group element $g_{k+1}(\cU_{\varepsilon\ast})$ to $g_k(\cU_\varepsilon)$ since $g_{k+1}(\cU_{\varepsilon\ast})$ is the product of $x(\beta)$ for the walls $D(\beta)$ which are crossed to get from $\cU_-$
to the compartment $\cU_{\varepsilon\ast}$ and, when the walls $D(\beta_{k+1})$ are deleted, this get the group element $\rho(g_{k+1}(\cU_{\varepsilon\ast}))=g_k(\cU_\varepsilon)$. By induction on $k$ we have the following.

\underline{Statement for $\cS_k$}:  Distinct compartments have different elements of the picture group assigned to them, i.e., $g_k(\cU_\varepsilon)\neq g_k(\cU_{\varepsilon'})$ when $\cU_\varepsilon\neq \cU_{\varepsilon'}$.

To show that this holds for $\cS_{k+1}$, suppose not. Then there are two compartments $\cU,\cU'$ in the complement of the walls $D(\beta_j)$ for $j\le k+1$ which have the same group element $g_{k+1}(\cU)=g_{k+1}(\cU')\in G(\cS_{k+1})$. Then $\rho(g_{k+1}(\cU))=\rho(g_{k+1}(\cU'))$ in $G(\cS_k)$. Therefore, by induction on $k$, $\cU,\cU'$ lie in the same compartment $\cU_\varepsilon$ for $\cS_k$. Since $\cU\neq\cU'$, by symmetry we must have $\cU=\cU_{\varepsilon+}$, $\cU'=\cU_{\varepsilon-}$. But then,
\[
    g_{k+1}(\cU)=g_{k+1}(\cU_{\varepsilon+})=g_{k+1}(\cU_{\varepsilon-})\,x(\beta_{k+1})=g_{k+1}(\cU')x(\beta_{k+1}).
\]
So, $g_{k+1}(\cU)\neq g_{k+1}(\cU')$. So, the statement holds for all $\cS_k$. Taking $\overline\gg$ to be $g_k$ for $k$ maximal we obtain statement (3). So, the proposition holds.
\end{proof}

By Proposition \ref{prop: group labels of compartments} above, the function $\overline\gg$ taking a cluster tilting object $T$ to $\overline\gg[T]\in G(\Lambda)$ is given by
\[
	\overline\gg[T]=x(\beta_1)^{\varepsilon_1}x(\beta_2)^{\varepsilon_2}\cdots x(\beta_k)^{\varepsilon_k}
\]
where $\beta_1,\cdots,\beta_k$ are the $c$-vectors of any mutation sequence from the shifted projected cluster $P_1[1],\cdots,P_n[1]$ to $T$ and the sign ${\varepsilon_i}$ is positive/negative depending on whether the mutation is green/red. (For more explanations, see \cite{I:stability}, \cite{I:MGS4CT}, \cite{IOTW3}, \cite{ITW}, \cite{IT13}.)

Recall that the \emph{picture group} of $\Lambda$ is the group with generators $x(\beta)$ for all real Schur roots $\beta$ which are also the dimension vectors of all exceptional $\Lambda$-modules. The relations are given as follows where $[a,b]:=b^{-1}aba^{-1}$.
\[
	[x_{\beta_i},x_{\beta_j}]=x_{\gamma_1}\cdots x_{\gamma_r}
\]
for $\beta_i,\beta_j$ any pair of hom orthogonal roots (roots whose corresponding modules are Hom-orthogonal) so that $\Ext(\beta_i,\beta_j)=0$ and $\gamma_k$ are the dimension vectors of indecomposable extensions $M_{\gamma_k}$ of $M_{\beta_i}^{a_k}$ by $M_{\beta_j}^{b_k}$ in increasing order of the fraction $a_k/b_k$. For example, in type $B_2$ we have:
\[
	x_\alpha x_\beta=x_\beta  x_{\alpha+\beta}x_{2\alpha+\beta}
	x_\alpha.
\]



\subsection{Proof of Theorem \ref{thm C}}

We first prove the special case when $[X]:\cW\to \cW'$ has rank 1, i.e., $X$ is either an exceptional module in $\cW$ or a shifted indecomposable projective object of $\cW$.

\begin{lem}\label{special case of Key Lemma}
Suppose that $[X]:\cW\to \cW'$ has rank $1$ and $[Y]$ is a cluster tilting set in $\cW'$. Then
\[
	\overline\gg_\cW([Y]\circ[X])=\gg_\cW[X]\varphi_{\cW'}^\cW(\overline\gg_{\cW'}[Y])
\]
\end{lem}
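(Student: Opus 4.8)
\textbf{Proof plan for Lemma \ref{special case of Key Lemma}.}

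The plan is to reduce the rank-1 case of Theorem \ref{thm C} to the already-known statement about complete cluster tilting objects in Proposition \ref{prop: group labels of compartments}, by unfolding the definition of the composite morphism and comparing mutation sequences. First I would fix a complete cluster tilting object $\widehat Y = Y \oplus Y''$ in $\cW'$ extending the partial cluster tilting set $Y$ (possible since $\cW' = X^\perp \cap \cW$ is itself the module category of an hereditary algebra, so partial cluster tilting sets extend to complete ones). Since $[X]:\cW\to\cW'$ has rank $1$, the object $X$ together with $\sigma_X(\widehat Y)$ forms a complete cluster tilting object $\widehat Z := X \coprod \sigma_X(\widehat Y)$ in $\cW$, by the defining properties of $\sigma_X$ in Definition \ref{def: cluster morphism category}. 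The key observation is that $\sigma_X(\widehat Y) = \sigma_X(Y) \coprod \sigma_X(Y'')$ and that, by definition of composition in $\cX(\Lambda)$, the composite $[Y]\circ[X]:\cW\to\cW''$ is represented by $X \coprod \sigma_X(Y)$, which is a sub-cluster-tilting-set of $\widehat Z$.

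Next I would compute $\overline\gg_\cW$ on complete cluster tilting objects using the explicit formula $\overline\gg[T]=x(\beta_1)^{\varepsilon_1}\cdots x(\beta_k)^{\varepsilon_k}$ in terms of the $c$-vectors of a mutation sequence from the initial shifted projective cluster. The crucial claim is this: a mutation sequence in $\cW$ from the initial cluster of $\cW$ to $\widehat Z$ can be chosen to pass through the cluster $X \coprod (\text{initial cluster of } \cW')$, where the second block is $\sigma_X$ applied to the shifted projectives of $\cW'$. From that intermediate cluster onward, all mutations take place ``inside $\cW'$'' — i.e., the $c$-vectors of those mutations, as dimension vectors in $\cW$, are exactly $\varphi_{\cW'}^\cW$ applied to the corresponding $c$-vectors in $\cW'$, because $\sigma_X$ preserves the relevant dimension-vector equations (property (2) of $\sigma_S$) and hence respects the wall structure. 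This is where I would invoke that $\varphi_{\cW'}^\cW$ is a group homomorphism sending $x(\beta) \in G(\cW')$ to $x(\beta) \in G(\cW)$: the tail of the mutation sequence produces $\varphi_{\cW'}^\cW(\overline\gg_{\cW'}[\widehat Y])$, while the head (from initial cluster of $\cW$ to $X \coprod$ initial cluster of $\cW'$) produces, by definition, the element I would call $\gg_\cW[X]$, the group element attached to the rank-1 morphism $[X]$. Truncating both sides from $\widehat Y$ down to $Y$ (equivalently, passing from $\widehat Z$ to $X\coprod\sigma_X(Y)$) is compatible because the further mutations needed to reach the full cluster all lie in $\cW'$ (resp.\ in the image of $\sigma_X$ restricted to $\cW'$), so they contribute the same extra factor on both sides and cancel.

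The main obstacle I anticipate is making precise the claim that ``mutations inside $\cW'$ correspond, via $\sigma_X$, to mutations inside $\cW$ with matching $c$-vectors and matching green/red signs.'' Concretely one must check that if $\widehat Y'$ is the mutation of $\widehat Y$ in direction $k$ in $\cW'$, with wall $D(\beta)$ separating them (so $M_\beta \in \cW'$ is the unique indecomposable in $(\widehat Y \setminus Y_k)^\perp \cap \cW'$), then $X \coprod \sigma_X(\widehat Y')$ is the mutation of $X \coprod \sigma_X(\widehat Y)$ in the corresponding direction in $\cW$, and the separating wall is $D(\beta)$ viewed in $\cW$, with the same sign $\varepsilon$. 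The compatibility of $\sigma_X$ with perpendicular categories — property (3) in Definition \ref{def: cluster morphism category}, which says $\widehat Y^\perp \cap \cW' = (X \oplus \sigma_X(\widehat Y))^\perp \cap \cW$ and its analogue after deleting one summand — is exactly what delivers this, together with the fact that $\Hom$-vanishing determining the sign of the mutation is detected inside $\cW$ since $\cW'$ is a full subcategory and $M_\beta, Y_k'$ all lie in $\cW'$. Once this mutation-compatibility lemma is in hand, the rest is bookkeeping with the word $\overline\gg[T]=x(\beta_1)^{\varepsilon_1}\cdots x(\beta_k)^{\varepsilon_k}$ and the homomorphism property of $\varphi_{\cW'}^\cW$, and the well-definedness (independence of chosen mutation path) supplied by Proposition \ref{prop: group labels of compartments}(3).
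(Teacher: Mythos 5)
Your proposal is correct and follows essentially the same route as the paper: your ``mutation-compatibility lemma'' --- that $\sigma_X$ identifies the cluster and wall structure of $\cW'$ with the clusters of $\cW$ containing $X$, preserving wall labels --- is precisely the paper's assertion that the picture for $\cW'$ is the local picture for $\cW$ at the vertex $X$, and both arguments then concatenate a path from the initial chamber to $\widetilde\cU = X\coprod\sigma_X[P_{\cW'}[1]]$ (giving $\gg_\cW[X]$ by definition) with the $\sigma_X$-image of a path inside $\cW'$ (giving $\varphi_{\cW'}^\cW(\overline\gg_{\cW'}[Y])$). Two minor slips, neither fatal: $[Y]$ is already a complete cluster tilting set in $\cW'$, so no extension/truncation is needed; and the mutated summand in $\cW$ is $\sigma_X(Y_k')$ rather than $Y_k'$ itself (these generally differ), so the matching of signs should be justified via $\left<\undim\sigma_X(Y_k')-\undim Y_k',\undim M_\beta\right>=0$, which follows from property (2) of $\sigma_X$ together with $M_\beta\in X^\perp$, rather than from fullness of $\cW'$ alone.
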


\begin{proof} Since finitely generated wide subcategories are equivalent to module categories, we can take $[X]$ to be a rank 1 morphism $mod\text-\Lambda\to \cW$ and $[Y]$ a cluster-tilting set in $\cW$. Since $\Lambda$ is tame, $\cW$ has finite type. Let $[Z]=\sigma_X[Y]$. Then $[X,Z]$ is a cluster-tilting set for $mod\text-\Lambda$. Given that $X$ is an indecomposable exceptional module or shifted projective module, it gives a vertex in the picture of $\Lambda$. The cluster $[X,Z]$ gives a chamber $\widetilde\cU'$ in the picture of $\Lambda$ with vertices $X$ and the elements of $[Z]$. This chamber has an associated picture group element
\[
	\overline\gg(\widetilde\cU')=\overline\gg[X,Z]\in G(\Lambda).
\]

Let $[P[1]]$ be the set of shifted projective objects of $\cW$ and let $[Q]=\sigma_X[P[1]]$. Then $[X,Q]$ is another cluster-tilting set for $\Lambda$ corresponding to another chamber $\widetilde\cU$ in the picture of $\Lambda$ and, by definition of $\gg[X]$, we have
\[
	\overline\gg(\widetilde\cU)=\overline\gg[X,Q]=\gg[X]\in G(\Lambda).
\]	
The statement of Lemma \ref{special case of Key Lemma} is
\[
	\overline\gg(\widetilde\cU')=\overline\gg(\widetilde\cU)\overline\gg_{\cW}[Y]
\]
where $\overline\gg_{\cW}[Y]\in G(\cW)$ is the picture group element assigned to the chamber $\cU'$ in the picture for $\cW$ with vertices given by $[Y]$. By definition, this is the product of all labels $x(\beta_i)$ corresponding to the walls $D(\beta_i)$ that need to be crossed to get from the chamber $\cU$ given by $[P[1]]$ to the chamber $\cU'$ given by $[Y]$. However, the picture for $\cW$ is isomorphic to the local picture for $\Lambda$ at the point $X$ and the corresponding walls have the same labels. The vertices of the picture for $\cW$ are objects of $\cW$ (or shifted projective objects) and the corresponding vertices of the picture for $\Lambda$ are given by applying $\sigma_X$ to these labels. Since $\cU,\cU'$ correspond to $\widetilde\cU,\widetilde\cU'$ by construction and the walls separating them have identical labels, $\overline\gg_\cW[Y]$ is equal to the product of the labels of the walls in the picture for $\Lambda$ separating $\widetilde\cU$ from $\widetilde\cU'$. Therefore,
\[
	\overline\gg_{\cW}[Y]=\overline\gg(\widetilde\cU)^{-1}\overline\gg(\widetilde\cU').
\]
This completes the proof of Lemma \ref{special case of Key Lemma}.

This proof is illustrated in Figure \ref{Fig:group functor on picture} in the case $X=P_2$, $[Y]=[S_3,S_1[1]]$ and $P[1]=[S_1[1],S_3[1]]$. Then $[Z]=[P_3,S_2]$ and $[Q]=[S_2,P_3[1]]$. The chamber labels $\cU,\cU',\widetilde\cU,\widetilde\cU'$ agree with the labels in the proof.
\end{proof}

Theorem \ref{thm C} will follow from the following Key Lemma.

\begin{lem}\label{Key Lemma}
Let $\cW$ be a finitely generated wide subcategory of $mod\text-\Lambda$ of rank $k$ and let $[P_\cW]=[P_1,\cdots,P_k]$ be the set of projective objects of $\cW$. Let $[P_\cW[1]]$ be the set of shifted projective objects $P_i[1]$ in the bounded derived category of $\cW$. Then, for any pair of morphisms $[X]:mod\text-\Lambda\to \cW$, $[Y]:\cW\to 0$ in the cluster morphism category we have the following equation in the picture group of $\Lambda$.
\[
	\overline\gg([X]\textstyle\coprod \sigma_X[Y])=\overline\gg([X]\textstyle\coprod \sigma_X[P_\cW[1]])\, \varphi_\cW(\overline\gg_\cW([Y])).
\]
\end{lem}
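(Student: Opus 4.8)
The plan is to prove the Key Lemma by induction on $r = rk\,[X]$, with the rank $1$ case supplied by Lemma \ref{special case of Key Lemma} applied with ambient module category $mod\text-\Lambda$, so that the target wide subcategory there plays the role of $\cW$ here. The case $r=0$ is immediate: then $\cW=mod\text-\Lambda$, $X=\emptyset$, $\sigma_X$ and $\varphi_\cW$ are identities, and $\overline\gg([P_\cW[1]])=e$ by Proposition \ref{prop: group labels of compartments}(1), so both sides equal $\overline\gg([Y])$. So fix $r\ge1$, and assume the statement for every hereditary algebra of finite or tame type and every morphism out of its module category of rank $<r$.

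For the inductive step I would split off a rank $1$ first factor. By the cubical structure of Theorem \ref{thm: CMC is cubical}, a choice of a single element $X_1\in X$ produces a factorization
\[
	mod\text-\Lambda \xrightarrow{\ [X_1]\ }\cV\xrightarrow{\ [X']\ }\cW, \qquad \cV=X_1^\perp, \quad X = X_1\,{\textstyle\coprod}\,\sigma_{X_1}(X'),
\]
with $rk\,[X_1]=1$ and $rk\,[X']=r-1$. Since $\Lambda$ is of finite or tame type and $\cV$ has rank $n-1$, the hereditary algebra $H_\cV$ with $mod\text-H_\cV\cong\cV$ is again of finite or tame type (in fact representation-finite if $\Lambda$ is tame), so the inductive hypothesis is available inside $\cV$. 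Put $[Y_0]:=[X']\,{\textstyle\coprod}\,\sigma_{X'}[Y]$ and $[Y_1]:=[X']\,{\textstyle\coprod}\,\sigma_{X'}[P_\cW[1]]$; these are complete cluster tilting sets of $\cV$, being the composites $\cV\xrightarrow{[X']}\cW\to0$ built from $[Y]$ and from $[P_\cW[1]]$. Associativity of composition in the cluster morphism category gives the two identities
\[
	[X]\,{\textstyle\coprod}\,\sigma_X[Y] = [X_1]\,{\textstyle\coprod}\,\sigma_{X_1}[Y_0], \qquad [X]\,{\textstyle\coprod}\,\sigma_X[P_\cW[1]] = [X_1]\,{\textstyle\coprod}\,\sigma_{X_1}[Y_1].
\]

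Now I would chain three inputs. Lemma \ref{special case of Key Lemma} applied to the rank $1$ morphism $[X_1]$ together with the cluster tilting set $[Y_i]$ of $\cV$ gives, for $i=0$ and $i=1$,
\[
	\overline\gg\big([X_1]\,{\textstyle\coprod}\,\sigma_{X_1}[Y_i]\big) = \overline\gg\big([X_1]\,{\textstyle\coprod}\,\sigma_{X_1}[P_\cV[1]]\big)\,\varphi_\cV\big(\overline\gg_\cV([Y_i])\big);
\]
the inductive hypothesis applied inside $\cV$ to $[X']:\cV\to\cW$ and $[Y]:\cW\to0$ gives
\[
	\overline\gg_\cV([Y_0]) = \overline\gg_\cV([Y_1])\,\varphi_\cW^\cV\big(\overline\gg_\cW([Y])\big).
\]
Combining the $i=0$ relation with this, using that $\varphi_\cV$ is a group homomorphism with $\varphi_\cV\circ\varphi_\cW^\cV=\varphi_\cW$ (from the discussion preceding Theorem \ref{thm C}), and recollecting via the $i=1$ relation, yields
\[
	\overline\gg\big([X_1]\,{\textstyle\coprod}\,\sigma_{X_1}[Y_0]\big) = \overline\gg\big([X_1]\,{\textstyle\coprod}\,\sigma_{X_1}[Y_1]\big)\,\varphi_\cW\big(\overline\gg_\cW([Y])\big).
\]
Substituting the two associativity identities turns this exactly into $\overline\gg([X]\,{\textstyle\coprod}\,\sigma_X[Y]) = \overline\gg([X]\,{\textstyle\coprod}\,\sigma_X[P_\cW[1]])\,\varphi_\cW(\overline\gg_\cW([Y]))$, which is the claim.

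The step I expect to demand the most attention is not conceptual: all the genuine geometry — reading $\overline\gg_\cW([Y])$ off as a product of wall labels and matching it with the local picture of $\Lambda$ at $X$ — is already packaged inside Lemma \ref{special case of Key Lemma}, so the Key Lemma itself is an assembly argument. What must be checked with care is the bookkeeping: that the $\sigma$-maps compose so that the two associativity identities genuinely hold (i.e.\ legitimately invoking that the cluster morphism category is a category), that $[Y_0]$ and $[Y_1]$ are the cluster tilting sets of $\cV$ I claimed, and that the inductive hypothesis may legitimately be invoked for $H_\cV$ — which rests on the fact that a proper finitely generated wide subcategory of a hereditary algebra of finite or tame type is again of finite or tame type.
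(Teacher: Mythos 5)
Your argument is correct and is essentially the paper's own proof: an induction on $rk\,[X]$ that factors $[X]$ into lower-rank pieces, applies Lemma \ref{special case of Key Lemma} to one factor and the inductive hypothesis (inside the intermediate wide subcategory) to the other, and chains the results using associativity of composition --- the paper uses an arbitrary factorization $[X]=[B]\circ[A]$ into smaller-rank pieces where you split off a rank-one first factor, an immaterial difference. One parenthetical aside is inaccurate (when $X_1$ is regular exceptional, $X_1^\perp$ contains all homogeneous tubes and so is tame infinite rather than representation-finite), but this does not affect the argument, since all that is needed is that $\cV$ is of finite or tame type.
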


\begin{proof} This will follow from Lemma \ref{special case of Key Lemma}. Thus, suppose we have:
\[
	mod\text-\Lambda \xrightarrow{[X]}\cW\xrightarrow{[Y]}0
\]
and $[P[1]]:\cW\to 0$ is the cluster morphism given by the shifted projective objects in $\cW$. We need to show that 
\begin{equation}\label{eq: key lemma}
	\overline\gg([Y]\circ[X])=\overline\gg[X\textstyle\coprod \sigma_X(P[1])]\varphi_{\cW}(\overline\gg_{\cW}[Y]).
\end{equation}
When $[X]$ has rank 1, this holds by Lemma \ref{special case of Key Lemma}. So, suppose $[X]$ has rank $\ge2$. Then $[X]$ factors as a composition
\[
	[X]=[B]\circ [A]:mod\text-\Lambda\xrightarrow{[A]} \cW^\ast\xrightarrow{[B]} \cW.
\]
Since $[A],[B]$ have smaller rank than $[X]$ we have
\[
    \overline\gg([Y]\circ[X])=\overline\gg([Y]\circ[B]\circ [A])=
    \gg[A]\overline\gg_{\cW^\ast}([Y]\circ[B])=\gg[A]\gg_{\cW^\ast}[B]\overline\gg_{\cW'}[Y].
\]
In the special case $Y=P[1]$ we get:
\[
    \gg[X]:=\overline\gg([P[1]]\circ[X])=
    \gg[A]\overline\gg_{\cW^\ast}([P[1]]\circ[B])=\gg[A]\gg_{\cW^\ast}[B].
\]
So, 
\[
    \overline\gg([Y]\circ[X])=\gg[A]\gg_{\cW^\ast}[B]\overline\gg_{\cW'}[Y]=\gg[X]\overline\gg_{\cW'}[Y].
\]
This proves the Key Lemma \ref{Key Lemma}. 
\end{proof}

We need one more lemma to obtain a faithful group functor for the cluster morphism category of $\Lambda$ or, equivalently, for any finitely generated wide subcategory $\cW$ of finite or tame type in $mod\text-H$ for any hereditary algebra $H$. Indeed, for any such wide subcategory $\cW$ and any partial cluster tilting object $X$ in $\cW$ we define $\gg_\cW([X])\in G(\cW)$ to be
\begin{equation}\label{def of g(X)}
	\gg_\cW([X]):=\overline\gg_\cW([X]\textstyle\coprod \sigma_X[P[1]])
\end{equation}
where $P[1]$ is the set of all shifted projective objects in the wide subcategory $\cW'=X^\perp\cap \cW$ of $\cW$. Then, as a special case of Lemma \ref{Key Lemma} we have
\[
	\overline\gg_\cW([Y]\circ[X])=\overline\gg_\cW([X]\textstyle\coprod \sigma_X[Y])=\gg_\cW([X])\varphi_{\cW'}^\cW(\overline\gg_{\cW'}([Y]))
\]
for any cluster tilting set $Y$ in $\cW'$.

\begin{lem}\label{lem: g(X) is a group functor}
Let $[X]:\cW\to \cW'$, $[Y]:\cW'\to \cW''$ be morphisms in the cluster morphism category of $\Lambda$. 
Then
\[
	\gg_\cW([X]\textstyle\coprod \sigma_X[Y])=\gg_\cW([X])\varphi_{\cW'}^\cW(\gg_{\cW'}([Y])).
\]
\end{lem}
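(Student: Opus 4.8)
The statement $\gg_\cW([X]\coprod\sigma_X[Y])=\gg_\cW([X])\,\varphi_{\cW'}^\cW(\gg_{\cW'}([Y]))$ is the general composition law for partial cluster tilting objects, and the natural strategy is to reduce it to the special case where $[Y]$ is a \emph{complete} cluster tilting set in $\cW'$, which is exactly Lemma \ref{Key Lemma} (applied inside $\cW$ rather than inside $mod\text-\Lambda$) together with the displayed identity just above the statement. So first I would fix the composition $\cW\xrightarrow{[X]}\cW'\xrightarrow{[Y]}\cW''$, write $[Z]=\sigma_Y[P''[1]]$ for the shifted projectives $P''[1]$ of $\cW''$, and complete $[Y]$ to the cluster tilting set $[Y]\coprod[Z]$ of $\cW'$. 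The key combinatorial fact I need is that $\sigma_X$ is compatible with this completion: $\sigma_X([Y]\coprod[Z])=\sigma_X[Y]\coprod\sigma_{X\coprod\sigma_X[Y]}[Z]$, and more to the point that the shifted projectives of $\cW''$, pushed forward by $\sigma_{X\coprod\sigma_X[Y]}$ into $\cW$, coincide with $\sigma_{X\coprod\sigma_X[Y]}$ applied to the shifted projectives of the target; this is precisely the associativity of composition in the cluster morphism category (Definition \ref{def: cluster morphism category}), i.e.\ that $\sigma_R=\sigma_S\circ\sigma_{\sigma_S(T)}$ when $R=S\coprod\sigma_S(T)$, combined with the definition \eqref{def of g(X)} of $\gg_\cW$ in terms of $\overline\gg_\cW$.

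**Key steps in order.** (1) Apply \eqref{def of g(X)} to the left side: $\gg_\cW([X]\coprod\sigma_X[Y])=\overline\gg_\cW\big((X\coprod\sigma_X Y)\coprod\sigma_{X\coprod\sigma_X Y}[P''[1]]\big)$, where $P''[1]$ are the shifted projectives of $\cW''=(X\coprod\sigma_X Y)^\perp\cap\cW=Y^\perp\cap\cW'$. (2) Recognize that $(X\coprod\sigma_X Y)\coprod\sigma_{X\coprod\sigma_X Y}[P''[1]]=X\coprod\sigma_X\big(Y\coprod\sigma_Y[P''[1]]\big)$ by associativity of $\sigma$; call $Y'=Y\coprod\sigma_Y[P''[1]]$, a \emph{complete} cluster tilting set of $\cW'$. (3) Apply Lemma \ref{Key Lemma} to the composition $\cW\xrightarrow{[X]}\cW'\xrightarrow{[Y']}0$ (reading that lemma with $\cW$ in place of $mod\text-\Lambda$, which is legitimate since $\cW$ is itself a module category of an hereditary algebra): $\overline\gg_\cW(X\coprod\sigma_X Y')=\gg_\cW([X])\,\varphi_{\cW'}^\cW(\overline\gg_{\cW'}[Y'])$. (4) Apply \eqref{def of g(X)} inside $\cW'$: since $Y'=Y\coprod\sigma_Y[P''[1]]$, we have $\overline\gg_{\cW'}[Y']=\gg_{\cW'}([Y])$ by definition. (5) Combine (1)–(4) and use functoriality of $\varphi_{\cW'}^\cW$ to read off the claimed identity.

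**Main obstacle.** The genuinely delicate point is step (2): the identity $(X\coprod\sigma_X Y)\coprod\sigma_{X\coprod\sigma_X Y}[P''[1]]=X\coprod\sigma_X(Y\coprod\sigma_Y[P''[1]])$ inside $\cC_\cW$. This is morally the associativity of composition in $\cX(\Lambda)$, but here it must be applied with the middle morphism $[Y']$ being the full composite $[Y]$ followed by the ``go to zero'' morphism of $\cW''$; one has to check that the three defining conditions of $\sigma$ in Definition \ref{def: cluster morphism category} (partial cluster tilting, dimension vectors differing by an integer combination, and equality of perpendicular categories) propagate through the two-step factorization, so that the $\sigma$-pushforwards genuinely agree on the nose and not merely up to reindexing. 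Once this bookkeeping is in place, everything else is a formal chain of substitutions using Lemma \ref{Key Lemma} and the definition \eqref{def of g(X)}; the harder analytic and geometric content (the polygonal deformation lemma, uniqueness of compartment labels) has already been absorbed into Proposition \ref{prop: group labels of compartments} and Lemma \ref{special case of Key Lemma}.
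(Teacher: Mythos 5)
Your proof is correct and follows essentially the same route as the paper: both arguments complete $[Y]$ to a full cluster tilting set of $\cW'$ and reduce to the already-established identity $\overline\gg_\cW([Y']\circ[X])=\gg_\cW([X])\,\varphi_{\cW'}^\cW(\overline\gg_{\cW'}([Y']))$ for complete $[Y']$, using associativity of composition in $\cX(\Lambda)$ (your step (2), which the paper uses implicitly when it writes $[C]=[Z]\circ[A]=[B]\circ[X]$). The only difference is cosmetic: the paper takes an arbitrary cluster tilting set $Z$ in $\cW''$ and cancels $\overline\gg_{\cW''}([Z])$ at the end, whereas you specialize to $Z=P''[1]$ so that this factor is the identity and the definition \eqref{def of g(X)} applies directly.
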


\begin{proof}
Let $Z$ be any cluster tilting set in $\cW''$ and let $[A]=[Y]\circ [X]$, $[B]=[Z]\circ [Y]$, $[C]=[Z]\circ [A]=[B]\circ[X]$. Since $[B]=[Z]\circ [Y]$ is a cluster tilting set for $\cW'$ we have
\[
	\overline\gg_{\cW'}([B])=\gg_{\cW'}([Y])\overline\gg_{\cW''}([Z])
\]
Since $[C]=[B]\circ [X]=[Z]\circ [A]$ is a cluster tilting set for $\cW$ we also have
\[
	\overline\gg_\cW([C])=\overline\gg_\cW([B]\circ [X])=\gg_\cW([X])\overline\gg_{\cW'}([B])=\gg_\cW([X])\gg_{\cW'}([Y])\overline\gg_{\cW''}([Z])
\]
\[
	\overline\gg_\cW([C])=\overline\gg_\cW([Z]\circ [A])=\gg_{\cW'}([A])\overline\gg_{\cW''}([Z])
\]
Multiplying by $\overline\gg_{\cW''}([Z])^{-1}$ we obtain
\[
	\gg_\cW([X])\gg_{\cW'}([Y])=\gg_{\cW'}([A])=\gg_{\cW'}([X]\textstyle\coprod \sigma_X[Y])
\]
as claimed.
\end{proof}

\begin{eg}
Consider the standard example: $\Lambda=KA_3$, for the linearly oriented quiver $A_3$: 
\[
	1\leftarrow 2\leftarrow 3.
\]
Consider the cluster morphisms
\[
	mod\text-\Lambda\xrightarrow{[X]} \cW\xrightarrow{[Y]} \cW'
\]
where $X=P_2$, with $\cW=P_2^\perp$ having indecomposable objects $S_1$ and $S_3=Y$. Then $\sigma_{P_2}:\cC_\cW\to \cC_\Lambda$ sends the objects $S_1,S_3,S_1[1],S_3[1]$ to $S_1, P_3, S_2, P_3[1]$ resp. So, $\sigma_X(Y)=\sigma_{P_2}(S_3)=P_3$ which makes $\cW'=(P_2\oplus P_3)^\perp$ with one indecomposable object $S_1$. 

By \eqref{def of g(X)}, the picture group element corresponding to $X=P_2$ is
\[
    \gg(P_2)=\overline\gg(P_2 \textstyle\coprod \sigma_{P_2}(S_1[1],S_3[1]))=\overline\gg(P_2,S_2,P_3[1])=x(S_2)x(P_2)\in G(\Lambda)
\]
since, to get to the chamber $\widetilde \cU$ in Figure \ref{Fig:group functor on picture} with corners $P_2,S_2,P_3[1]$ from the unbounded chamber, we need to pass through the walls $D(S_2)$ and $D(P_2)$.

\begin{figure}[htbp]
\begin{center}
\begin{tikzpicture}
\begin{scope}[yscale=.75]
	\clip (-5.75,-4.5) rectangle (4.25,4.8);
		\draw[very thick] (.75,1.3) circle [radius=3cm];
		\draw[very thick,red] (-2.25,1.3) circle [radius=3cm];
		\draw[very thick,green] (-.75,-1.3) circle [radius=3cm];
		\begin{scope}
		\clip (-.75,-5.2) rectangle (4.25,5);
		\draw[very thick] (-.75,1.3) ellipse [x radius=3cm,y radius=2.6cm];
		\end{scope}
		\begin{scope}[rotate=60]
		\clip (0,-5) rectangle (-5,5);
		\draw[very thick] (0,0) ellipse [x radius=3cm,y radius=2.6cm];
		\end{scope}
\begin{scope}
\clip (-2.4,-3) rectangle (2,.1);
		\draw[very thick] (-.75,.43)  circle [radius=2.68cm];
		\end{scope}
\end{scope}
\draw[fill,blue] (-.73,-.97) circle[radius=1mm]; 
\draw[blue] (-.73,-.93)node[above]{$P_3$};
\draw[fill,blue] (.73,.97) circle[radius=1mm]; 
\draw[blue] (.73,.8)node[left]{$P_2$};
\draw[fill,blue] (1.9,.03) circle[radius=1mm]; 
\draw[blue] (1.9,.1)node[right]{$S_2$};
\draw[fill,blue] (-2.25,.97) circle[radius=1mm]; 
\draw[blue] (-2.2,.8)node[right]{$P_1=S_1$};

\draw[fill,blue] (-.73,2.9) circle[radius=1mm]; 
\draw[blue] (-.73,2.8)node[below]{$P_3[1]$};
\draw (2.2,2.1)node{\tiny$ D(P_2)$};
\draw (4,2)node{\tiny$D(S_2)$};
\draw[green] (1.5,.8) node{\tiny$D(S_3)$};
\draw[red] (.6,2.1) node{\tiny$D(S_1)$};
\draw (1.3,1.5) node{$\widetilde\cU$};
\draw (1,0) node{$\widetilde\cU'$};
\draw[thick,blue,->] (5.4,0)--(4.4,0) ;
\draw[blue] (5,-.4) node{$\sigma_{P_2}$};
\begin{scope}[xshift=7cm,yshift=-1cm]
\draw (1,-1.6) node{$\cW=P_2^\perp$};
\begin{scope}
\clip (.73,1.2) circle[radius=2cm];
		\draw[very thick,red] (-2.25,1.3) circle [radius=3cm];
		\draw[very thick,green] (-.75,-1.3) circle [radius=3cm];
\end{scope}
\draw[green] (1.4,.8) node[right]{\tiny$D(S_3)$};
\draw[red] (.6,2.1) node[right]{\tiny$D(S_1)$};
\draw[fill,blue] (.73,1.3) circle[radius=1mm]; 
\draw[blue] (.73,1.4) node[right]{$0$}; 
\draw[fill,blue] (1.94,0) circle[radius=1mm]; 
\draw[blue] (2,0) node[right]{$S_1[1]$}; 
\draw[fill,blue] (.45,0) circle[radius=1mm]; 
\draw[blue] (.4,0) node[left]{$S_3$}; 
\draw[fill,blue] (.3,2.9) circle[radius=1mm]; 
\draw[blue] (.4,3) node[right]{$S_3[1]$}; 
\draw[fill,blue] (-.8,1.7) circle[radius=1mm]; 
\draw[blue] (-.8,1.7) node[above]{$S_1$}; 
\draw (2,1.7) node{$\cU$};
\draw (1.2,-.2) node{$\cU'$};
\end{scope}
%
\end{tikzpicture}
\caption{In the picture for $A_3$ on the left, the chamber $\widetilde\cU$ with corners $P_2, S_2, P_3[1]$ has picture group element $\gg(P_2)=\overline\gg(P_2, S_2, P_3[1])=x(S_2)x(P_2)$. In the picture for $\cW=P_2^\perp$ on the right, the corresponding chamber $\cU$ has identity group element $\overline\gg_\cW(S_1[1],S_2[1])=e$. Crossing the wall $\color{green}D(S_3)$, both picture group elements are multiplied by $x(S_3)$.}
\label{Fig:group functor on picture}
\end{center}
\end{figure}
Since $\cW$ is semi-simple, $G(\cW)=\ZZ^2$ is the free abelian group generated by $x(S^1),x(S^3)$. The picture group element for $Y=S_3$ is
\[
    \gg_\cW(S_3)=\overline\gg_\cW(S_3\textstyle\coprod \sigma_{S_3}(S_1[1]))=\overline\gg_\cW(S_3, S_1[1])=x(S_3).
\]
Since $\sigma_{P_2,P_3}(S_1[1])=\sigma_{P_2}(S_1[1])=S_2$, the product of these is
\[
    \gg(P_2, P_3)=\overline\gg(P_2,P_3\textstyle\coprod \sigma_{P_2,P_3}(S_1[1]))=\overline\gg(P_2,P_3,S_2)=x(S_2)x(P_2)x(S_3)=
    \gg(P_2)\gg_\cW(S_3)
\]
Indeed, one must pass through the walls $D(S_2),D(P_2),D(S_3)$ to get to the chamber $\widetilde \cU'$ in Figure \ref{Fig:group functor on picture} with corners $P_2,P_3,S_2$.
\end{eg}

\begin{thm}\label{thm: faithful group functors exist}
Equation \eqref{def of g(X)} gives a faithful (contravariant) group functor from the cluster morphism category of $\Lambda$ to its picture group $G(\Lambda)$ for $\Lambda$ a hereditary algebra of finite or tame representation type.
\end{thm}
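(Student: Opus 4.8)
The plan is to assemble the two ingredients already in place — the composition formula of Lemma~\ref{lem: g(X) is a group functor} and the separation statement of Proposition~\ref{prop: group labels of compartments}(3) — and to add only a short faithfulness argument. Set $F([X]):=\varphi_\cW(\gg_\cW([X]))^{-1}$ for a morphism $[X]\colon\cW\to\cW'$. That $F$ preserves identities is immediate: $\mathrm{id}_\cW=[\emptyset]$ and, by \eqref{def of g(X)} together with Proposition~\ref{prop: group labels of compartments}(1), $\gg_\cW([\emptyset])=\overline\gg_\cW(\emptyset\textstyle\coprod\sigma_\emptyset[P_\cW[1]])=\overline\gg_\cW([P_\cW[1]])=e$. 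That $F$ respects composition is exactly Lemma~\ref{lem: g(X) is a group functor} after applying $\varphi_\cW$: from $\gg_\cW([Y]\circ[X])=\gg_\cW([X])\,\varphi_{\cW'}^\cW(\gg_{\cW'}([Y]))$ and $\varphi_{\cW'}=\varphi_\cW\circ\varphi_{\cW'}^\cW$ one gets $\varphi_\cW(\gg_\cW([Y]\circ[X]))=\varphi_\cW(\gg_\cW([X]))\,\varphi_{\cW'}(\gg_{\cW'}([Y]))$, which is the functor law (with the contravariance of Theorem~\ref{thm C}). So the entire statement reduces to faithfulness: for distinct $[X]\neq[X']\colon\cW\to\cW'$ one must exhibit $\varphi_\cW(\gg_\cW([X]))\neq\varphi_\cW(\gg_\cW([X']))$ in $G(\Lambda)$.

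I would first handle the case $\cW=mod\text-\Lambda$. Here \eqref{def of g(X)} gives $\gg([X])=\overline\gg(T_X)$ with $T_X:=X\textstyle\coprod\sigma_X[P_{\cW'}[1]]$, and $T_X$ is a \emph{complete} cluster tilting object for $\Lambda$, being the composite $[P_{\cW'}[1]]\circ[X]\colon mod\text-\Lambda\to0$, of rank $n$. By Proposition~\ref{prop: group labels of compartments}(3) it suffices to show $T_X\neq T_{X'}$. If $T_X=T_{X'}=:T$, then $X$ and $X'$ are subsets of $T$ with $X^\perp=\cW'=(X')^\perp$, hence $(X\cup X')^\perp=X^\perp\cap(X')^\perp=\cW'$; but $X\cup X'$ is again a subset of $T$, so the rank identity $rk\,S^\perp=n-|S|$ for subsets $S\subseteq T$ — the very identity used in the proof of Theorem~\ref{thm: CMC is cubical} to verify Condition~(2) — forces $|X\cup X'|=|X|$, i.e.\ $X'\subseteq X$, and symmetrically $X=X'$, contradicting $[X]\neq[X']$.

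For general $\cW$ I would reduce to the preceding case by precomposition. Choose a partial cluster tilting set $\cZ$ in $mod\text-\Lambda$ with $\cZ^\perp=\cW$ (such $\cZ$ exists, as in the proof of Theorem~\ref{thm: CMC is cubical}), giving $[\cZ]\colon mod\text-\Lambda\to\cW$. Since $[X]\circ[\cZ]=[\,\cZ\textstyle\coprod\sigma_\cZ(X)\,]$ and $\sigma_\cZ$ is injective on $\cC_\cW$ (Definition~\ref{def: cluster morphism category}), the morphisms $[X]\circ[\cZ],\ [X']\circ[\cZ]\colon mod\text-\Lambda\to\cW'$ are distinct, so by the case already handled $\gg([X]\circ[\cZ])\neq\gg([X']\circ[\cZ])$ in $G(\Lambda)$. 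On the other hand, Lemma~\ref{lem: g(X) is a group functor} applied to $mod\text-\Lambda\xrightarrow{[\cZ]}\cW\xrightarrow{[X]}\cW'$ gives $\gg([X]\circ[\cZ])=\gg([\cZ])\,\varphi_\cW(\gg_\cW([X]))$, and likewise for $X'$; cancelling the common left factor $\gg([\cZ])$ yields $\varphi_\cW(\gg_\cW([X]))\neq\varphi_\cW(\gg_\cW([X']))$, which is what was needed. (Alternatively, granting that $\varphi_\cW\colon G(\cW)\to G(\Lambda)$ is injective, one may skip this reduction: distinct $[X],[X']$ already yield distinct cluster tilting objects of $\cW$, which Proposition~\ref{prop: group labels of compartments}(3) for $H_\cW$ separates already inside $G(\cW)$.)

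The main obstacle, such as it is, is the injectivity of $[X]\mapsto T_X$ — that extending a partial cluster tilting set by the shifted projectives of its perpendicular category loses no information — and this is settled by the ``distinct subsets of a fixed cluster tilting object have distinct perpendicular categories'' fact from the proof of Theorem~\ref{thm: CMC is cubical}; everything else is bookkeeping with Lemma~\ref{lem: g(X) is a group functor} and Proposition~\ref{prop: group labels of compartments}. The only point requiring care is to run the faithfulness argument inside $G(\Lambda)$ rather than inside the individual groups $G(\cW)$, so that it does not tacitly presuppose that $\varphi_\cW$ is an embedding.
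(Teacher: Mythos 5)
Your proposal is correct and follows the same basic strategy as the paper: Lemma~\ref{lem: g(X) is a group functor} gives functoriality, and Proposition~\ref{prop: group labels of compartments}(3) gives faithfulness via the completed cluster tilting objects $T_X=X\coprod\sigma_X[P[1]]$. You do, however, supply two details that the paper's proof leaves implicit. First, you prove that $[X]\mapsto T_X$ is injective (via the ``distinct subsets of a cluster tilting object have distinct perpendicular categories'' argument), whereas the paper simply asserts that $[X]\neq[Y]$ yields distinct cluster tilting objects. Second, and more substantively, the paper's faithfulness argument concludes only that $\gg_\cW[X]\neq\gg_\cW[X']$ in $G(\cW)$, while the functor takes values $\varphi_\cW(\gg_\cW[X])^{-1}$ in $G(\Lambda)$; distinctness in $G(\Lambda)$ requires either injectivity of $\varphi_\cW$ or an extra step. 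Your reduction to the case $\cW=mod\text{-}\Lambda$ by precomposing with $[\cZ]:mod\text{-}\Lambda\to\cW$ and cancelling $\gg([\cZ])$ in $G(\Lambda)$ cleanly closes this gap without assuming $\varphi_\cW$ is an embedding, which is a genuine improvement in rigor over the printed proof.
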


\begin{proof}
Lemma \ref{lem: g(X) is a group functor} shows that $F([X])=\gg_\cW([X])^{-1}$ is a group functor on the cluster morphism category of $\Lambda$. It remains to show that it is faithful. To show this, suppose not. Then there are cluster morphisms $[X],[Y]:\cW\to \cW'$ with the same group value $\gg_\cW[X]=\gg_\cW[Y]$. This would imply that the cluster tilting objects $[X]\oplus P[1]$ and $[Y]\oplus P[1]$ in $\cW$ have the same group value in $G(\cW)$ which is not possible by Proposition \ref{prop: group labels of compartments}.
\end{proof}

We now prove our main theorem.

\begin{thm}[\ref{thm B}]\label{main thm}
The cluster morphism category of an hereditary algebra of finite or tame representation type is a $CAT(0)$-category if and only if there are no tubes of rank $\ge3$ in its Auslander-Reiten quiver.
\end{thm}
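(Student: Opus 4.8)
The plan is to assemble the machinery built in Sections \ref{section one: cubical categories}--\ref{section one: cubical categories} and the two that follow, and feed it into Theorem \ref{thm: conditions for CAT0}. Write $\cX(\Lambda)$ for the cluster morphism category of $\Lambda$. By Theorem \ref{thm: CMC is cubical}, $\cX(\Lambda)$ is a cubical category, so Theorem \ref{thm: conditions for CAT0} applies: $\cX(\Lambda)$ is a $CAT(0)$-category provided its first-factor condition (1), its last-factor condition (2), and its faithful-group-functor condition (3) all hold, while (1) and (2) are in any case necessary. So the whole proof is a matter of determining, for $\Lambda$ of finite or tame type, exactly when these three conditions hold, and then quoting the combinatorial result of Section \ref{section one: cubical categories}'s successor.

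First I would dispose of conditions (1) and (3). Condition (1) holds for any cluster morphism category: the components $T_i$ of a morphism $[T]:\cW\to\cW'$ are exactly those that are pairwise compatible, i.e.\ do not extend one another, as noted immediately after Theorem \ref{thm: conditions for CAT0}. Condition (3) holds because $\Lambda$ is of finite or tame type: by Theorem \ref{thm: faithful group functors exist} the assignment $F([X])=\gg_\cW([X])^{-1}$ is a faithful contravariant functor $\cX(\Lambda)\to G(\Lambda)$ to the picture group viewed as a one-object groupoid, and composing with inversion (which identifies a groupoid with its opposite) turns it into a faithful covariant group functor. Consequently, for $\Lambda$ of finite or tame type, $\cX(\Lambda)$ is a $CAT(0)$-category \emph{if and only if} condition (2) of Theorem \ref{thm: conditions for CAT0} holds: sufficiency is the implication ``(1) and (2) and (3) $\Rightarrow CAT(0)$'' with (1) and (3) now automatic, and necessity is the final sentence of Theorem \ref{thm: conditions for CAT0}.

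Next I would translate condition (2) into representation theory. By the corollary following Corollary \ref{hom orth}, $\cX(\Lambda)$ satisfies condition (2) of Theorem \ref{thm: conditions for CAT0} if and only if Condition $(\ast)$ of Corollary \ref{hom orth} holds, namely that every family of $\Hom$-orthogonal exceptional modules, each pair of which forms an exceptional pair, can be arranged into an exceptional sequence. Then Theorem \ref{old thm B} finishes the job: for $\Lambda$ of finite or tame type, $(\ast)$ holds if and only if the Auslander--Reiten quiver of $\Lambda$ has no tube of rank $\ge 3$. Chaining the biconditionals $CAT(0)\iff(2)\iff(\ast)\iff\text{no tube of rank }\ge 3$ yields the theorem; in particular the ``only if'' direction rests on the fact, established in the proof of Theorem \ref{old thm B}, that a rank $\ge 3$ tube forces $(\ast)$ to fail (its mouth gives $\Hom$-orthogonal, pairwise exceptional modules that extend each other in a cycle), whence condition (2), being necessary, fails and $\cX(\Lambda)$ cannot be $CAT(0)$.

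The substantive work has already been carried out before this point --- the construction of the faithful group functor $\gg_\cW$ and the analysis of Condition $(\ast)$ --- so for the main theorem itself the only thing requiring care is bookkeeping: lining up the ``necessary''/``sufficient'' structure of Theorem \ref{thm: conditions for CAT0} with the three equivalences above, and recording explicitly that in our setting conditions (1) and (3) always hold, so that condition (2) alone governs whether $\cX(\Lambda)$ is $CAT(0)$.
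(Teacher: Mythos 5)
Your proposal is correct and follows essentially the same route as the paper's own proof: invoke Theorem \ref{thm: CMC is cubical} to get cubicality, observe that condition (1) of Theorem \ref{thm: conditions for CAT0} is automatic and condition (3) is supplied by Theorem \ref{thm: faithful group functors exist}, and reduce condition (2) to Condition $(\ast)$ and hence to the absence of rank $\ge 3$ tubes via Theorem \ref{old thm B}. If anything, you are slightly more explicit than the paper in spelling out that, with (1) and (3) automatic, the necessity of (2) is what drives the ``only if'' direction.
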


\begin{proof}
Recall that a $CAT(0)$-category is a cubical category whose classifying space is locally $CAT(0)$ and thus a $K(\pi,1)$ by \cite{Gromov87}. In Theorem \ref{thm: CMC is cubical} we showed that the cluster morphism category $\cX(\Lambda)$ of any finite dimensional hereditary algebra satisfies the definition of a cubical category (Definition \ref{def: cubical category}).

Theorem \ref{thm: conditions for CAT0} gives criteria for when a cubical category is $CAT(0)$. The first two conditions are necessary and the three conditions together are sufficient. The first condition is immediate since it requires that cluster-tilting objects be given by a pairwise compatibility condition which is true by definition for any hereditary algebra. Theorem \ref{old thm B} shows that the second condition (that the $c$-vectors of a cluster-tilting object are given by a pairwise compatibility condition) fails for tame algebras having an exceptional tube of rank $\ge3$ and holds for all other $\Lambda$ of finite or tame representation type. Finally, Theorem \ref{thm: faithful group functors exist} proves the last condition, that there exists a faithful group functor from the cluster morphism category to some group when $\Lambda$ has finite or tame representation type.

This concludes the proof of our main theorem, Theorem \ref{thm B}.
\end{proof}


\end{document}